\documentclass[preprint,12pt]{elsarticle}

\usepackage{amscd}
\usepackage{amsmath}
\usepackage{amsthm}
\usepackage{amsfonts}
\usepackage{graphicx}
\usepackage{amssymb}
\usepackage{color}
\usepackage{amsbsy}
\usepackage{graphicx}
\usepackage{amssymb,color,amsbsy}%, amsthm}
\usepackage{mathtools}
\usepackage{leftindex}

\usepackage{float}
\usepackage{comment}

\usepackage{stmaryrd}

\usepackage[ruled]{algorithm2e}

\usepackage[matrix,arrow]{xy}
\DeclareMathAlphabet{\mathpzc}{OT1}{pzc}{m}{it}

\usepackage{pgfpages}
\usepackage{tikz}
\usetikzlibrary{shapes.geometric}
\usetikzlibrary{decorations.pathmorphing}
\usetikzlibrary{arrows}
\usetikzlibrary{backgrounds}
\usetikzlibrary{positioning}
\usetikzlibrary{fit}
\usepackage{caption}
\usepackage{amsthm}

\usepackage{amsmath}

\usepackage[pagebackref=true, bookmarksopen=true,colorlinks=true, linkcolor=red,citecolor=blue]{hyperref}

\usepackage[capitalise]{cleveref}  %References will output ‘Theorem 3.21’, without you having to enter the “Theorem.”

\global\long\def\ii{\cap}%

\global\long\def\u{\cup}%

\global\long\def\I{\bigcap}%

\global\long\def\U{\bigcup}%

\global\long\def\s{\subset}%

\global\long\def\P{\prime}%

\global\long\def\ñ{\sim}%

\global\long\def\core#1{\textnormal{core}(#1)}%

\global\long\def\ker#1{\textnormal{ker}(#1)}%

\global\long\def\nucleus#1{\textnormal{nucleus}(#1)}%

\global\long\def\diadem#1{\textnormal{diadem}(#1)}%

\global\long\def\corona#1{\textnormal{corona}(#1)}%

\global\long\def\a#1{\left|#1\right|}%

\usepackage{tikz-cd}

\usepackage{tkz-graph}
\usepackage{multicol}

\usepackage{subcaption}

\newtheorem{theorem}{Theorem}[section]

\newtheorem{conjecture}[theorem]{Conjecture}
\newtheorem{corollary}[theorem]{Corollary}

\newtheorem{lemma}[theorem]{Lemma}

\newtheorem{problem}[theorem]{Problem}

\newtheorem{remark}[theorem]{Remark}

\newtheorem{observation}[theorem]{Observation}
\usepackage{comment}
\begin{document}

\begin{abstract}
	A K\H{o}nig--Egerv\'ary graph is a graph $G$ satisfying
	$\alpha(G)+\mu(G)=n(G)$, where $\alpha(G)$, $\mu(G)$, and $n(G)$ denote the
	independence number, the matching number, and the order of $G$, respectively.
	Let $\textnormal{core}(G)$ and $\textnormal{corona}(G)$ be the intersection
	and the union of all maximum independent sets of $G$.
	
	In this paper, we provide a complete characterization of graphs satisfying
	$\a{\corona G}+\a{\core G}=2\alpha(G)+1$,
	thus giving a solution to an open problem posed by Levit and Mandrescu.
	It is known that for a non-K\H{o}nig--Egerv\'ary graph with a unique odd cycle,
	the following hold:
	$\ker G=\textnormal{core}(G),\allowbreak\ 
	\left|\textnormal{corona}(G)\right|
	+\left|\textnormal{core}(G)\right|
	=2\alpha(G)+1,\allowbreak\ 
	\textnormal{corona}(G)\cup N(\textnormal{core}(G))=V(G)$.
	We extend these three results to a family of graphs containing an
	arbitrarily large number of odd cycles.
\end{abstract}

\begin{keyword} 	Maximum Independent Set, Kőnig-Egerváry
	graph, Corona, Diadem, Core, 2-bicritical 	\MSC 05C70, 05C75 \end{keyword}
\begin{frontmatter} 
	\title{A characterization of graphs with $\a{\corona G}+\a{\core G}=2\alpha(G)+1$}

	%\author[IMASL,DEPTO]{XXXXXXXXXXXXXXXX} 	\ead{XXXXXXXXXXXX@unsl.edu.ar} 
	
	\author[IMASL,DEPTO]{Kevin Pereyra} 	\ead{kdpereyra@unsl.edu.ar}

	%\author[]{XXX} 	%\ead{xxx@XXX} 
	%\author[]{XXX} 	%\ead{xxx@XXX}

	\address[IMASL]{Instituto de Matem\'atica Aplicada San Luis, Universidad Nacional de San Luis and CONICET, San Luis, Argentina.}
	\address[DEPTO]{Departamento de Matem\'atica, Universidad Nacional de San Luis, San Luis, Argentina.} 	
	
	\date{Received: date / Accepted: date} 
	
\end{frontmatter} %

\section{Introduction}\label{sss0}

Let $\alpha(G)$ denote the cardinality of a maximum independent set,
and let $\mu(G)$ be the size of a maximum matching in $G=(V,E)$.
It is known that $\alpha(G)+\mu(G)$ equals the order of $G$,
in which case $G$ is a König--Egerváry graph 
\cite{deming1979independence,gavril1977testing,stersoul1979characterization}.
K\H{o}nig-Egerv\'{a}ry graphs have been extensively studied
\cite{bourjolly2009node,jarden2017two,levit2006alpha,levit2012critical,jaume5404413kr}.
It is known that every bipartite graph is a König--Egerváry graph 
\cite{egervary1931combinatorial}. These graphs were independently introduced by Deming \cite{deming1979independence}, Sterboul \cite{sterboul1979characterization}, and \cite{gavril1977testing}.

Let $\Omega^{*}(G)=\left\{ S:S\textnormal{ is an independent set of }G\right\}$,
$\Omega(G)=\{S:S$ is a maximum independent set of $G\}$,
$\textnormal{core}(G)=\I\left\{ S:S\in\Omega(G)\right\}$ 
\cite{levit2003alpha+}, and 
$\textnormal{corona}(G)=\U\left\{ S:S\in\Omega(G)\right\}$ 
\cite{boros2002number}. 
The number $d_{G}(X)=\left|X\right|-\left|N(X)\right|$ is the 
difference of the set $X\s V(G)$, and 
$d(G)=\max\{d_{G}(X):X\s V(G)\}$ is called the \emph{critical difference} of $G$.
A set $U\s V(G)$ is \emph{critical} if $d_{G}(U)=d(G)$ 
\cite{zhang1990finding}. 
The number $d_{I}(G)=\max\left\{ d_{G}(X):X\in\Omega^{*}(G)\right\}$ 
is called the \emph{critical independence difference} of $G$. 
If a set $X\s\Omega^{*}(G)$ satisfies $d_{G}(X)=d_{I}(G)$, then it is called 
a \emph{critical independent set} \cite{zhang1990finding}. 
Clearly, $d(G)\ge d_{I}(G)$ holds for every graph. 
It is known that $d(G)=d_{I}(G)$ for all graphs 
\cite{zhang1990finding}. 
We define $\textnormal{ker}(G)=\I\{ S:S$ is a critical independent set of $G\}$ 
\cite{levit2012vertices,lorentzen1966notes,schrijver2003combinatorial}.
Let $\textnormal{nucleus}(G)=\I\{S:S$ is a maximum critical independent 
set of $G\}$ \cite{jarden2019monotonic}, and 
$\textnormal{diadem}(G)=\U\{S:S$ is a critical independent set of $G\}$ 
\cite{short2015some}.

It is known that $\textnormal{ker}(G)\s \textnormal{core}(G)$ for every graph 
\cite{levit2012vertices}, while equality holds in bipartite graphs 
\cite{levit2013critical}, and for unicyclic non-König--Egerváry graphs 
\cite{levit2011core}.  

A graph $G$ is almost bipartite if it has only one odd cycle. In
\cite{levit2025almost} it is shown that the equality between $\ker G$
and $\textnormal{core}(G)$ also holds for every almost bipartite 
non-König--Egerváry graph. That is, we have the following:

\begin{theorem}
	[\label{levit}\cite{levit2025almost}] If $G$ is an almost bipartite
	non-König--Egerváry graph, then 
	\begin{itemize}
		\item $\ker G=\textnormal{core}(G),$
		\item $\left|\textnormal{corona}(G)\right|+\left|\textnormal{core}(G)\right|=2\alpha(G)+1,$
		\item $\textnormal{corona}(G)\cup N(\textnormal{core}(G))=V(G).$
	\end{itemize}
\end{theorem}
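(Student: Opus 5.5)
We fix an almost bipartite non-König--Egerváry graph $G$ with unique odd cycle $C$. The plan is to reduce everything to bipartite graphs, where the needed facts are classical: $\ker H=\core H$, the König--Egerváry identities $|\corona H|+|\core H|=2\alpha(H)$, and $\corona H\cup N(\core H)=V(H)$ for bipartite $H$ with a perfect-matching-free structure handled through the Gallai--Edmonds/König decomposition.

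\textbf{Step 1: an edge that kills the odd cycle.} Since $G$ is non-König--Egerváry, no edge $e=xy$ of $C$ can be removed without consequence. I would first show that for every edge $e=xy\in E(C)$ the bipartite graph $G-e$ satisfies $\alpha(G-e)=\alpha(G)+1$. Indeed $\mu(G-e)\ge \mu(G)-1$ and $\alpha(G-e)\le\alpha(G)+1$. Since $G-e$ is bipartite, $\alpha(G-e)+\mu(G-e)=n$. Because $\alpha(G)+\mu(G)<n$, the only consistent option (using $\alpha(G)+\mu(G)\ge n-1$ for almost bipartite graphs, again by removing $e$) is $\alpha(G-e)=\alpha(G)+1$ and $\mu(G-e)=\mu(G)$. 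Consequently every maximum independent set of $G-e$ contains both $x$ and $y$, and
\[
\Omega(G)=\{S\setminus\{x\},\,S\setminus\{y\}: S\in\Omega(G-e)\},
\]
up to the check that each such set is maximum independent in $G$.

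\textbf{Step 2: the core and corona identities.} From this description, $\core G=\core{G-e}\setminus\{x,y\}$ and $\corona G=\corona{G-e}$. This uses that $x,y\in\core{G-e}$, and that every vertex of a set $S\in\Omega(G-e)$ survives in one of the two derived sets. Hence
\[
|\corona G|+|\core G|=|\corona{G-e}|+|\core{G-e}|-2=2\alpha(G-e)-2+\dots
\]
Here I would invoke the bipartite/König--Egerváry identity $|\corona H|+|\core H|=2\alpha(H)$, valid for König--Egerváry graphs (Levit--Mandrescu). This gives $2\alpha(G)+2-2$, which is off by one. So the correct accounting must be subtler: a vertex of $\corona{G-e}$ may lie only in sets of $\Omega(G-e)$ through which it still survives, and $x,y$ themselves remain in $\corona G$ while leaving the core. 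Recounting carefully with $x,y\in\corona G\setminus\core G$ should give $|\corona G|=|\corona{G-e}|$ and $|\core G|=|\core{G-e}|-2$, i.e. $2\alpha(G)$. The discrepancy signals that in general not all $x,y$ behave this way. The right choice is to take the edge $e$ of $C$ carefully, or to count via the identity $|\corona H|+|\core H|=2\alpha(H)$ applied after deleting $N[\core G]$-parts. Resolving this counting is the main obstacle. First I would test on $C_5$ with a pendant structure to calibrate which vertices of $C$ enter the corona, then adjust Step 1's description accordingly.

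\textbf{Step 3: ker and the covering property.} For $\ker G=\core G$, the inclusion $\ker G\subseteq\core G$ is known. For the reverse, I would show $\core G$ is itself a critical independent set. Its difference equals $d(G-e)$ because $C$ lies outside $N[\core G]$; this follows from Step 1, since the cycle vertices are not in the core. Then intersecting critical sets, as in the bipartite case where $\ker H=\core H$, yields equality. Finally, $\corona G\cup N(\core G)=V(G)$ follows by transferring the bipartite identity $\corona{G-e}\cup N(\core{G-e})=V$ through the correspondence of Step 2, checking separately the vertices of $C$.
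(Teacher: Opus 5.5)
\textbf{There is a genuine gap.} Your Step 1 numerics are correct: $G-e$ is bipartite, so $\alpha(G-e)=\alpha(G)+1$ and $\mu(G-e)=\mu(G)$, and every member of $\Omega(G-e)$ contains $x$ and $y$. But the description of $\Omega(G)$ you draw from this is false, and Steps 2 and 3 are built on it.

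You only get the inclusion $\{S\setminus\{x\},S\setminus\{y\}:S\in\Omega(G-e)\}\subseteq\Omega(G)$. A maximum independent set of $G$ that avoids both $x$ and $y$, or that contains $x$ together with another neighbour of $y$, is not of this form. Take $G=C_5$ with vertices $v_1,\dots,v_5$ and $e=v_1v_2$. Then $\Omega(G-e)=\{\{v_1,v_2,v_4\}\}$ produces only $\{v_1,v_4\}$ and $\{v_2,v_4\}$, whereas $|\Omega(C_5)|=5$. Accordingly $\textnormal{core}(C_5)=\emptyset\neq\{v_4\}$ and $\textnormal{corona}(C_5)=V(C_5)\neq\{v_1,v_2,v_4\}$. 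This is exactly why your count returns $2\alpha(G)$, the off-by-one you flag and leave open. Since all edges of $C_5$ are equivalent, no careful choice of $e$ helps. What deleting one edge really gives is $\textnormal{corona}(G-e)\subseteq\textnormal{corona}(G)$ and $\textnormal{core}(G)\subseteq\textnormal{core}(G-e)\setminus\{x,y\}$. These two inclusions point in opposite directions and cannot produce the identity.

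Step 3 inherits the broken correspondence and adds further problems. First, $\textnormal{core}(G)$ cannot have difference $d(G-e)$: in $C_5$ every subset has difference at most $d(C_5)=0$, while $d(P_5)=1$. Second, $V(C)\cap N[\textnormal{core}(G)]=\emptyset$ does not follow from $V(C)\cap\textnormal{core}(G)=\emptyset$. Third, showing $\textnormal{core}(G)$ is critical would only reprove $\textnormal{ker}(G)\subseteq\textnormal{core}(G)$. The reverse inclusion needs $\textnormal{core}(G)$ to lie in every critical independent set, which you never address.

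The missing idea, which is what the paper uses, is to split the odd cycle off globally instead of cutting one of its edges.
\begin{enumerate}
\item Since $G$ is not K\H{o}nig--Egerv\'ary, $L^c_G$ is a nonempty 2-bicritical graph (\cref{larsonthm}) whose only possible odd cycle is $C$. Hence \cref{puyelarpendietnedesc} forces $L^c_G=C$: an odd homeomorph of $K_4$, or any ear or pendant added to an odd cycle, already produces a second odd cycle.
\item As $\alpha(G)=\alpha(L_G)+\alpha(C)$, every $S\in\Omega(G)$ meets $L(G)$ in a member of $\Omega(L_G)$ and meets $V(C)$ in a member of $\Omega(C)$.
\item Conversely, a maximum critical independent set $I$ has $N(I)\subseteq L(G)$, so $I\cup K\in\Omega(G)$ for every $K\in\Omega(C)$.
\item Hence $V(C)\subseteq\textnormal{corona}(G)$ and $\textnormal{core}(G)\cap V(C)=\emptyset$. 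The traces $\Gamma=\{S\cap L(G):S\in\Omega(G)\}$ form a nonempty subfamily of $\Omega(L_G)$ in the K\H{o}nig--Egerv\'ary graph $L_G$.
\item \cref{saodi123} then gives $|\textnormal{corona}(G)|+|\textnormal{core}(G)|=2\alpha(L_G)+|V(C)|=2\alpha(G)+1$; this is \cref{120ij31}.
\item \cref{asdouih129i3} applied to $\Gamma$, together with $V(C)\subseteq\textnormal{corona}(G)$, gives the third item (\cref{asdopijh123}).
\end{enumerate}
The first item is not reproved in the paper at all; it is imported from Levit--Mandrescu. \cref{123129iu3} shows it fails once $L^c_G$ is a larger almost-bipartite matching covered graph, so it needs an argument beyond core/corona bookkeeping.
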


In particular, the second item of \cref{levit} motivates \cref{mainproblem}.

\begin{theorem}
	[\cite{levit2025almost}\label{asd12w3123}] If $G$ is an almost bipartite
	non-König--Egerváry graph, then 
	$\left|\textnormal{corona}(G)\right|+\left|\textnormal{core}(G)\right|=2\alpha(G)+1,$
\end{theorem}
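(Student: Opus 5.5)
The statement is the second item of \cref{levit}, so the quickest route is to deduce it from that theorem. Since the paper allows citing \cref{levit}, the proof could simply invoke it. A self-contained plan is still worth giving, because it also suggests how the result might extend to graphs with more odd cycles.

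Let $G$ be almost bipartite and non-König--Egerváry, and let $C$ be its unique odd cycle, of length $2k+1$. Choose an edge $e=xy$ of $C$ and set $H=G-e$. Then $H$ is bipartite, hence König--Egerváry. For bipartite (more generally König--Egerváry) graphs the known identity $\a{\corona H}+\a{\core H}=2\alpha(H)$ holds. We need the relations between the invariants of $G$ and $H$.

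The key steps are these.
\begin{enumerate}
\item Show $\alpha(H)=\alpha(G)+1$. If instead $\alpha(H)=\alpha(G)$, then $\mu(H)\le\mu(G)$ gives $n=\alpha(H)+\mu(H)\le\alpha(G)+\mu(G)$. Together with the general inequality $\alpha(G)+\mu(G)\le n$, this forces $G$ to be König--Egerváry, a contradiction. Since deleting an edge raises $\alpha$ by at most one, $\alpha(H)=\alpha(G)+1$.
\item Deduce the structure of $\Omega(H)$: every maximum independent set of $H$ contains both $x$ and $y$. Otherwise it would be independent in $G$ of size $\alpha(G)+1$. Hence $x,y\in\core H$, and $\Omega(G)\supseteq\{S\setminus\{x\},\,S\setminus\{y\}: S\in\Omega(H)\}$.
\item Prove the reverse inclusion, which is the main obstacle: every $T\in\Omega(G)$ extends to a maximum independent set of $H$ by adding $x$ or $y$. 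I would try to argue via matchings. Since $\mu(G)=\mu(H)$, which follows from $n=\alpha(H)+\mu(H)$ together with $\alpha(G)+\mu(G)<n$, a maximum matching $M$ of $H$ is also maximum in $G$ and avoids $e$. In $H$, $M$ matches $V\setminus S$ into $S$ for $S\in\Omega(H)$. Using that structure, an independent set $T$ of $G$ of size $\alpha(H)-1$ must miss exactly one vertex relative to a König--Egerváry cover, and that vertex must be $x$ or $y$.
\end{enumerate}

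With these steps, $\corona G=\corona H$: a vertex lost by removing $x$ from some $S$ is recovered by removing $y$ instead. Also $\core G=\core H\setminus\{x,y\}$. Therefore
\[
\a{\corona G}+\a{\core G}=\a{\corona H}+\a{\core H}-2=2\alpha(H)-2=2\alpha(G).
\]
This is off by one from the statement, so the choice of $e$ must be refined. The correct choice should make only one of $x,y$ lie in $\core H$, or make $\corona H$ exceed $\corona G$ by one. I therefore expect the real work to be choosing $e$ on $C$ (for instance, an edge at a vertex of $C$ adjacent to the bipartite "tree-like" parts) and redoing steps 2 and 3 carefully. Since \cref{levit} is already available, the proof can in any case close by citing its second item directly.
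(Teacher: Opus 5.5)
Falling back on \cref{levit} is circular. Its second item is verbatim the statement you are asked to prove, so invoking it supplies no argument.

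Your self-contained plan fails at step 3, which is false. Take $G=C_5$ with vertices $v_1,\dots,v_5$ in cyclic order and $e=v_1v_5$. Then $H=G-e$ is the path $P_5$, whose only maximum independent set is $\{v_1,v_3,v_5\}$. The sets $\{v_2,v_4\}$, $\{v_1,v_4\}$, $\{v_2,v_5\}\in\Omega(G)$ cannot be extended to it by adding $v_1$ or $v_5$. Hence $|\textnormal{corona}(G)|=5\neq 3=|\textnormal{corona}(H)|$, and $\textnormal{core}(G)=\emptyset\subsetneq\{v_3\}=\textnormal{core}(H)-\{v_1,v_5\}$.

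Your proposed repair of choosing $e$ more carefully cannot work either. Deleting any edge of the unique odd cycle leaves a bipartite graph, and your steps 1 and 2 are valid for every such edge. So both endpoints always lie in $\textnormal{core}(H)$. Steps 1--3 together force $|\textnormal{corona}(G)|+|\textnormal{core}(G)|=2\alpha(G)$, while the statement is true. Therefore step 3 fails for every almost bipartite non-K\H{o}nig--Egerv\'ary graph and every edge $e$ of the cycle. The off-by-one is not a bookkeeping slip. It comes from the maximum independent sets of $G$ that do not arise from maximum independent sets of $G-e$, and comparing with $G-e$ gives you no way to count them.

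The missing idea is to isolate the odd cycle rather than break it. The paper uses Larson's decomposition (\cref{larsonthm}). Since $G$ is not K\H{o}nig--Egerv\'ary, $L^{c}(G)\neq\emptyset$, and $L_{G}^{c}$ is a 2-bicritical graph with a unique odd cycle. Now consider an ear--pendant decomposition of $L_{G}^{c}$ (\cref{puyelarpendietnedesc}). It cannot start from an odd homeomorph of $K_4$, which has four odd cycles. Also, the first ear or pendant added to an odd cycle already creates a second odd cycle. So $L_{G}^{c}$ is the odd cycle $C_{2k+1}$ itself, which trivially satisfies $|\textnormal{corona}(C_{2k+1})|+|\textnormal{core}(C_{2k+1})|=(2k+1)+0=2\alpha(C_{2k+1})+1$.

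\cref{120ij31} then transfers this equality to $G$. For a maximum critical independent set $I$ one has $N(I)\subseteq L(G)$, so the traces of $\Omega(G)$ on $L^{c}(G)$ are exactly $\Omega(L_{G}^{c})$. The traces on $L(G)$ form a nonempty subfamily of $\Omega(L_{G})$, and they contribute exactly $2\alpha(L_{G})$ by \cref{saodi123}, since $L_G$ is K\H{o}nig--Egerv\'ary. In particular, every vertex of the odd cycle lies in $\textnormal{corona}(G)$ and none lies in $\textnormal{core}(G)$. This is precisely the information your edge-deletion comparison loses.
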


\begin{problem}[\cite{levit2025almost}\label{mainproblem}]
	Characterize graphs enjoying $\left|\textnormal{corona}(G)\right|+\left|\textnormal{core}(G)\right|=2\alpha(G)+1,$
\end{problem}

In this paper, we provide a complete characterization of graphs that satisfy
\[
\a{\corona G}+\a{\core G}=2\alpha (G) +1.
\]
\noindent Using Larson’s independence decomposition \cite{larson2011critical}, our result yields a
reductive-type solution to the \cref{mainproblem}: namely, a graph satisfies the
above equality if and only if its $L^{c}$  does (\cref{120ij31}). Consequently, any structural
solution to the \cref{mainproblem} is essentially reduced to the study of
2-bicritical graphs that satisfy the equality. In addition, we extend the three items of the \cref{levit} to a family of graphs with an
arbitrarily large number of odd cycles.

The paper is organized as follows. In \cref{sss0} we present the general context of the
problem and introduce the fundamental concepts. In \cref{sss1} we fix the notation used
throughout the paper. In \cref{sss2} we obtain the characterization of the graphs
satisfying
$
\a{\corona G}+\a{\core G}=2\alpha(G)+1,
$
and in \cref{sss3} we extend the \cref{levit}. Finally, in \cref{0i12j3} we conclude the
paper by proposing several open problems and conjectures motivated by our results.

\section{Preliminaries}\label{sss1}
All graphs considered in this paper are finite, undirected, and simple. 
For any undefined terminology or notation, we refer the reader to 
Lovász and Plummer \cite{LP} or Diestel \cite{Distel}.

Let \( G = (V, E) \) be a simple graph, where \( V = V(G) \) is the finite set of vertices and \( E = E(G) \) is the set of edges, with \( E \subseteq \{\{u, v\} : u, v \in V, u \neq v\} \). We denote the edge \( e=\{u, v\} \) as \( uv \). A subgraph of \( G \) is a graph \( H \) such that \( V(H) \subseteq V(G) \) and \( E(H) \subseteq E(G) \). A subgraph \( H \) of \( G \) is called a \textit{spanning} subgraph if \( V(H) = V(G) \). 

Let \( e \in E(G) \) and \( v \in V(G) \). We define \( G - e := (V, E - \{e\}) \) and \( G - v := (V - \{v\}, \{uw \in E : u,w \neq v\}) \). If \( X \subseteq V(G) \), the \textit{induced} subgraph of \( G \) by \( X \) is the subgraph \( G[X]=(X,F) \), where \( F:=\{uv \!\in\! E(G) : u, v \!\in \! X\} \).

The number of vertices in a graph $G$ is called the \textit{order} of the graph and denoted by $\left|G\right|$ or $n(G)$.
A \textit{cycle} in $G$ is called \textit{odd} (resp. \textit{even}) if it has an odd (resp. even) number of edges.

For a vertex $v\in V(G)$, the \emph{neighborhood} of $v$ is
\[
N_G(v)=\{u\in V(G): uv\in E(G)\}.
\]
When no confusion arises, we write $N(v)$ instead of $N_G(v)$. For a set $S\subseteq V(G)$, the \emph{neighborhood} of $S$ is
\[
N_G(S)=\bigcup_{v\in S} N_G(v).
\]

\begin{comment}
	\noindent The \emph{degree} of a vertex $v\in V(G)$ is
$
\deg_G(v)=|N_G(v)|.
$
\noindent The \emph{minimum degree} of $G$ is
$
\delta(G)=\min\{\deg_G(v): v\in V(G)\}.
$
A graph $G$ is called \emph{$r$-regular} if $\deg_G(v)=r$ for every
$v\in V(G)$. A vertex $v\in V(G)$ is called an \emph{isolated vertex} if
$
\deg_G(v)=0.
$
The number of isolated vertices of a graph $G$ is denoted by $i(G)$.
\end{comment}

A \textit{matching} \(M\) in a graph \(G\) is a set of pairwise non-adjacent edges. The \textit{matching number} of \(G\), denoted by  \(\mu(G)\), is the maximum cardinality of any matching in \(G\). Matchings induce an involution on the vertex set of the graph: \(M:V(G)\rightarrow V(G)\), where \(M(v)=u\) if \(uv \in M\), and \(M(v)=v\) otherwise. If \(S, U \subseteq V(G)\) with \(S \cap U = \emptyset\), we say that \(M\) is a matching from \(S\) to \(U\) if \(M(S) \subseteq U\). A matching $M$ is \emph{perfect} if $M(v)\neq v$ for every vertex
of the graph. A matching is \emph{near-perfect} if \( \left|{v \in V(G) : M(v) = v}\right| = 1 \).  A graph is a factor-critical graph if $G-v$ has perfect matching
for every vertex $v\in V(G)$.

A vertex set \( S \subseteq V \) is \textit{independent} if, for every pair of vertices \( u, v \in S \), we have \( uv \notin E \). 
The number of vertices in a maximum independent set is denoted by \( \alpha(G) \).  A \textit{bipartite} graph is a graph whose vertex set can be partitioned into two disjoint independent sets.

\section{A reductive characterization}\label{sss2}

Before proving the main result, we need some known results.

\begin{theorem}
	[\cite{jarden2019monotonic}\label{1230ij}] $\a{\corona G}+\a{\core G}=2\alpha(G)$
if and only if $\a{\U\Gamma}+\a{\I\Gamma}=2\alpha(G)$ holds for each
non-empty $\Gamma\s\Omega(G)$. 
\end{theorem}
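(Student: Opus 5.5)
The plan is to prove something stronger than the stated equivalence: the quantity
\[
f(\Gamma)=\a{\U\Gamma}+\a{\I\Gamma}-2\alpha(G)
\]
is nonnegative and monotone non-decreasing in $\Gamma$ (with respect to inclusion) on non-empty subfamilies $\Gamma\s\Omega(G)$. Both directions then follow at once. If $\Gamma=\{S\}$ is a single set, then $f(\Gamma)=2\a S-2\alpha(G)=0$. So monotonicity gives $0\le f(\Gamma)\le f(\Omega(G))$ for every non-empty $\Gamma$. Hence $f(\Omega(G))=0$, which is the hypothesis $\a{\corona G}+\a{\core G}=2\alpha(G)$, forces $f(\Gamma)=0$ for all $\Gamma$. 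The converse is the special case $\Gamma=\Omega(G)$.

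For monotonicity it suffices to add one set at a time. Fix a non-empty $\Gamma$, and put $A=\U\Gamma$ and $B=\I\Gamma$. Let $S\in\Omega(G)$. By inclusion--exclusion, using $\a S=\alpha(G)$,
\[
f(\Gamma\cup\{S\})-f(\Gamma)=2\alpha(G)-\a{A\cap S}-\a{B\cup S}=\a{S\setminus A}-\a{B\setminus S}.
\]
So the task reduces to the inequality $\a{B\setminus S}\le\a{S\setminus A}$.

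To prove this inequality, set $X=B\setminus S$. First, $X$ is independent, since it lies inside any $T\in\Gamma$. Second, $X$ is disjoint from $S$. Third, no vertex of $S\setminus N(X)$ is adjacent to $X$. Hence $(S\setminus N(X))\cup X$ is independent. Its size is $\a S-\a{S\cap N(X)}+\a X$, and this is at most $\alpha(G)=\a S$. Therefore $\a X\le\a{S\cap N(X)}$.

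It remains to check that $S\cap N(X)\s S\setminus A$. Any vertex $a\in A$ lies in some $T'\in\Gamma$. That set satisfies $T'\supseteq B\supseteq X$, so $a$ has no neighbour in $X$ because $T'$ is independent. Thus $N(X)\cap A=\emptyset$, and the chain $\a{B\setminus S}=\a X\le\a{S\cap N(X)}\le\a{S\setminus A}$ completes the step.

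The main obstacle is finding this exchange argument, namely recognising that the correct comparison is between $B\setminus S$ and the part of $S$ outside the union. Once the increment is written as $\a{S\setminus A}-\a{B\setminus S}$, the rest is a routine independent-set swap.
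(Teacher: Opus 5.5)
The paper does not prove this statement. It is quoted from \cite{jarden2019monotonic} and used only as a black box to obtain \cref{saodi123}, so there is no in-paper argument to compare with. Your proof is correct and complete.

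The increment formula $f(\Gamma\cup\{S\})-f(\Gamma)=|S\setminus A|-|B\setminus S|$ is right. It also covers $S\in\Gamma$, where both terms vanish because $B\subseteq S$. The set $(S\setminus N(X))\cup X$ is independent, and it is a disjoint union since $X\cap S=\emptyset$. Also $N(X)\cap A=\emptyset$, because every vertex of $A$ lies in some independent $T'\in\Gamma$ containing all of $X$. Since $\Omega(G)$ is finite, you can pass from $\{S\}$ to $\Gamma$ to $\Omega(G)$ one set at a time. This gives $0=f(\{S\})\le f(\Gamma)\le f(\Omega(G))$, and both directions follow.

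Your route proves that $\Gamma\mapsto|\bigcup\Gamma|+|\bigcap\Gamma|$ is monotone under inclusion. To my knowledge, that monotonicity is precisely the main result of the cited source, so you have reconstructed the natural argument. It gives more than the stated equivalence: every graph satisfies $|\textnormal{corona}(G)|+|\textnormal{core}(G)|\ge 2\alpha(G)$, and the equality passes to every non-empty subfamily.

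Your key inequality $|B\setminus S|\le|S\cap N(B\setminus S)|$ is the Hall condition behind Berge's criterion (\cref{19}), which the paper quotes later. You need it only for the single set $X$, so your direct swap avoids matching theory entirely. Your argument also never uses the K\H{o}nig--Egerv\'ary property. Combined with \cref{0i1j230i123}, it yields \cref{saodi123} at once, which is the only use the paper makes of this theorem.
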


\begin{theorem}
	[\label{asdioj123}\cite{short2015some}] For a graph G, the following are equivalent:
\begin{itemize}
	\item $G$ is K\H{o}nig-Egerváry graph,
	\item $\diadem G=\corona G,$ and
	\item $\a{\diadem G}+\a{\nucleus G}=2\alpha(G)$. 
\end{itemize}
\end{theorem}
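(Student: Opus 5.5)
The plan is to route all three equivalences through Larson's characterization: $G$ is K\H{o}nig--Egerváry if and only if some maximum independent set of $G$ is critical. Equivalently, this holds if and only if $\alpha_c(G)=\alpha(G)$, where $\alpha_c(G)$ is the size of a maximum critical independent set. Alongside it I will use three standard facts:
\begin{itemize}
\item every critical independent set is contained in a maximum independent set (Butenko--Trukhanov);
\item for a critical independent set $I$ there is a matching from $N(I)$ into $I$;
\item Larson's decomposition: for a maximum critical independent set $I$, the graph $G[I\cup N(I)]$ is K\H{o}nig--Egerváry, every critical independent set lies in $I\cup N(I)$, and $G-(I\cup N(I))$ has no nonempty critical independent set.
\end{itemize}
I will prove the cycle $(1)\Rightarrow(2)\Rightarrow(1)$ and $(1)\Leftrightarrow(3)$.

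\emph{$(1)\Rightarrow(2)$ and $(1)\Rightarrow(3)$.} Assume $G$ is K\H{o}nig--Egerváry. Fix a maximum matching $M$ and any $S\in\Omega(G)$. Since $|V-S|=n-\alpha=\mu$, and each edge of $M$ has at most one end in $S$, $M$ matches $V-S$ into $S$. Hence $N(S)=V-S$ and $d_G(S)=\alpha-\mu$. Moreover, for any independent $X$, the matching sends $X-S$ injectively into $N(X)$ and also covers $N(X)\cap(V-S)$, which bounds $d_G(X)\le \alpha-\mu$. So every maximum independent set is critical, which gives $\corona G\s\diadem G$. The reverse inclusion comes from Butenko--Trukhanov, so $\diadem G=\corona G$. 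In this situation the maximum critical independent sets are exactly the members of $\Omega(G)$. Hence $\nucleus G=\core G$, and the known identity $\a{\corona G}+\a{\core G}=2\alpha(G)$ for K\H{o}nig--Egerváry graphs yields (3).

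\emph{$(2)\Rightarrow(1)$.} Let $I$ be a maximum critical independent set and set $X=I\cup N(I)$. By Larson's decomposition, $\diadem G\s X$, so by (2) every $S\in\Omega(G)$ satisfies $S\s X$. Since $G[X]$ is K\H{o}nig--Egerváry and $N(I)$ is matched into $I$, we get $\alpha(G[X])=|I|$. Therefore $\alpha(G)=|S|\le |I|$, so $I$ is a maximum independent set that is critical, and $G$ is K\H{o}nig--Egerváry.

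\emph{$(3)\Rightarrow(1)$.} The aim is the general inequality $\a{\diadem G}+\a{\nucleus G}\le 2\alpha_c(G)$. Once it is in hand, (3) forces $\alpha_c(G)\ge\alpha(G)$, which is the criterion for (1). To prove the inequality, I localize to $H=G[X]$. Critical independent sets of $G$ are critical in $H$, and both $\diadem G$ and $\nucleus G$ lie in $X$. I then compare with $\corona H$ and $\core H$ and use $\a{\corona H}+\a{\core H}=2\alpha(H)=2|I|$.

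The main obstacle is exactly this comparison. Critical independent sets of $G$ need not coincide with all critical, or all maximum, independent sets of $H$. So I must check that the maximum critical independent sets of $G$ are precisely the members of $\Omega(H)$ of size $|I|$ that are critical in $G$. From this I need the inequality $\a{\diadem G}-\a{\nucleus G}$ versus $\a{\corona H}-\a{\core H}$ in the correct direction. I would first try to show that every member of $\Omega(H)$ is critical in $G$, using that $N_G$ and $N_H$ agree on subsets of $I$ together with the matching from $N(I)$ into $I$. If that holds, $\diadem G=\corona H$ and $\nucleus G=\core H$, and the inequality is in fact an equality.
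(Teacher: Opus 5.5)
The paper quotes this theorem from \cite{short2015some} without proof, so your argument has to stand on its own. Your implications $(1)\Rightarrow(2)$, $(1)\Rightarrow(3)$ and $(2)\Rightarrow(1)$ are sound. One small point: the bound $d_G(X)\le\alpha-\mu$ needs one more line than you give. Write $S=M(V-S)\cup R$ with $|R|=\alpha-\mu$; every vertex of $X-R$ has a distinct $M$-partner in $N(X)$, so $d_G(X)\le|X\cap R|\le|R|$.

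The genuine gap is $(3)\Rightarrow(1)$. You never prove the inequality $|\diadem{G}|+|\nucleus{G}|\le 2\alpha_c(G)$, and the route you propose for it rests on a false claim. For $S\in\Omega(H)$, $H$ is K\H{o}nig--Egerv\'ary, so $N_H(S)=X-S$. Hence $d_G(S)=d(G)-|N_G(S)-X|$, and $S$ is critical in $G$ exactly when no vertex of $S\cap N(I)$ has a neighbour in $L^c(G)$. The matching of $N(I)$ into $I$ says nothing about such neighbours.

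The graph of \cref{123129iu3} is a counterexample: $K_4$ on $\{a,b,c,f\}$ plus a pendant vertex $e$ at $b$. Here $d(G)=0$ and the critical independent sets are $\emptyset$ and $\{e\}$. So $I=\{e\}$, $X=\{e,b\}$ and $H\cong K_2$, yet $\{b\}\in\Omega(H)$ has $d_G(\{b\})=-3$. Thus $\diadem{G}=\nucleus{G}=\{e\}$, while $\corona{H}=\{e,b\}$ and $\core{H}=\emptyset$. Neither identification you hope for holds, although the two sums happen to agree, so your target inequality itself is fine.

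The missing idea is to apply \cref{saodi123} to $H$ with $\Gamma\subseteq\Omega(H)$ the family of maximum critical independent sets of $G$, not to all of $\Omega(H)$. You already identified this family correctly. This gives $|\bigcup\Gamma|+|\nucleus{G}|=2|I|$. It then remains to show $\diadem{G}=\bigcup\Gamma$, that is, every critical independent set $K$ of $G$ extends to a maximum critical one. This extension lemma is the real content of $(3)\Rightarrow(1)$.

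One proof goes as follows:
\begin{enumerate}
\item Let $M$ match $N(I)$ into $I$ and put $R=I-V(M)$, so $|R|=d(G)$.
\item For independent $Y\subseteq X$, each vertex of $Y-R$ has a distinct $M$-partner in $N_H(Y)$, so $d_H(Y)\le|Y\cap R|\le|R|$.
\item Since $K\subseteq X$ and $d_H(K)\ge d_G(K)=|R|$, equality holds throughout. This yields $N_G(K)=N_H(K)$, $R\subseteq K$ and $N(K)=M(K-R)$, hence $I\cap N(K)=M(K\cap N(I))$.
\item Then $S=K\cup(I-K-N(K))$ is independent, and $|S|=|I|$.
\item Moreover $N_G(S)\subseteq X-S$, so $d_G(S)\ge|I|-|N(I)|=d(G)$, and $S\in\Gamma$ contains $K$.
\end{enumerate}
With this lemma you can also finish without \cref{saodi123}. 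Since $I\subseteq\diadem{G}\subseteq I\cup N(I)$, and $M$ maps $\diadem{G}\cap N(I)$ injectively into $I-\nucleus{G}$, you get $|\diadem{G}|\le 2|I|-|\nucleus{G}|$. Without an argument of this kind, $(3)\Rightarrow(1)$ is not established.
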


The \cref{asdioj123} was conjectured in \cite{jarden2019monotonic,jarden2018critical} and was finally proved in \cite{short2015some}.

\begin{theorem}\label{0i1j230i123}
	 If G is a König--Egerváry graph, then
\begin{itemize}
	\item \textnormal{\cite{levit2003alpha+}} $\corona G\u N\left(\core G\right)=V(G).$
	\item \textnormal{\cite{levit2011set}} $\a{\core G} +\a{\corona G}=2\alpha(G).$
\end{itemize}
\end{theorem}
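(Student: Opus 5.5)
The statement has two items, and I would prove each from one structural fact about König--Egerváry graphs. If $G$ is König--Egerváry and $S\in\Omega(G)$, then every maximum matching $M$ matches $V(G)-S$ into $S$. Indeed, $|V(G)-S|=n(G)-\alpha(G)=\mu(G)$. Every edge of $M$ has at least one endpoint outside $S$, since $S$ is independent. As $|M|=\mu(G)=|V(G)-S|$, each vertex of $V(G)-S$ lies on exactly one edge of $M$, and the other endpoint of that edge lies in $S$. So $M$ is a perfect matching between $V(G)-S$ and the set $M(V(G)-S)\s S$, and exactly $\alpha(G)-\mu(G)$ vertices of $S$ are left unmatched. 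I fix one maximum matching $M$ and use it for every $S\in\Omega(G)$.

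\textbf{Second item.} The plan is to show that
\[
|\corona G-\core G|=2(\alpha(G)-|\core G|),
\]
which rearranges to $\a{\core G}+\a{\corona G}=2\alpha(G)$.

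First I claim that $M$ maps $\corona G-\core G$ into itself. Take $v\in\corona G-\core G$. Choose $S\in\Omega(G)$ with $v\in S$ and $S'\in\Omega(G)$ with $v\notin S'$. By the fact above, $v$ is matched by $M$ into $S'$, so $M(v)\in S'\s\corona G$. Moreover $M(v)$ is adjacent to $v\in S$, so $M(v)\notin S$ and hence $M(v)\notin\core G$. This proves the claim.

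Next, the vertices of $\corona G$ that $M$ leaves unmatched lie in $\core G$. If such a vertex $w$ missed some $S\in\Omega(G)$, then $w\in V(G)-S$ would be matched by $M$, a contradiction.

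It follows that $M$ restricts to a perfect matching of $\corona G-\core G$. This set is the disjoint union of $S-\core G$ and $\corona G-S$, where $S\in\Omega(G)$ is arbitrary. Since $S$ is independent, $M$ matches $S-\core G$ into $\corona G-S$. It also matches $\corona G-S$ into $S$, because $\corona G-S\s V(G)-S$, and the partners avoid $\core G$ by the claim. Therefore $|\corona G-S|=|S-\core G|=\alpha(G)-|\core G|$, and summing the two parts gives the displayed equality.

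\textbf{First item.} Let $v\notin\corona G$. Then $v$ lies outside every $S\in\Omega(G)$, so for each such $S$ the matching $M$ sends $v$ into $S$. Hence $M(v)\in\I\Omega(G)=\core G$, and since $M(v)$ is a neighbour of $v$, we get $v\in N(\core G)$. Thus $V(G)=\corona G\u N(\core G)$.

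The step that needs the most care is that the same matching $M$ works for all maximum independent sets simultaneously, because both items depend on it. The key point is that König--Egerváry forces $|V(G)-S|=\mu(G)$, so every maximum matching saturates $V(G)-S$, whichever $S$ is chosen. I do not expect the remaining steps to cause difficulty.
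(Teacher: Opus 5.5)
Your proof is correct. Both items rest on the fact you isolate at the start: in a K\H{o}nig--Egerv\'ary graph, every maximum matching $M$ matches $V(G)-S$ into $S$ for \emph{every} $S\in\Omega(G)$ simultaneously. Your counting justifies this: the $\mu(G)=|V(G)-S|$ disjoint edges of $M$ each meet $V(G)-S$.

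The paper does not prove this theorem; it imports both items from Levit and Mandrescu. Its closest in-house argument is \cref{asdouih12} together with \cref{asdouih129i3}, which prove the first item for an arbitrary nonempty $\Gamma\subseteq\Omega(G)$. Your proof of the first item is that argument specialised to $\Gamma=\Omega(G)$. The one difference is how one sees that $M(v)$ lies in a second set $I'$. The paper takes a maximum matching adapted to a single $I$ and leaves a counting step implicit: $M$ pairs $I'-I$ with $I-I'$, these have equal size, so $v$ cannot be matched into $I-I'$. Your observation that one $M$ is adapted to all maximum independent sets makes this step immediate.

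For the second item the paper offers no argument. Yours is a clean self-contained replacement: the involution $M$ preserves $\corona{G}-\core{G}$ and exchanges $S-\core{G}$ with $\corona{G}-S$. It also never uses that you take \emph{all} of $\Omega(G)$. Replace $\Omega(G)$, $\corona{G}$, $\core{G}$ by a nonempty $\Gamma\subseteq\Omega(G)$, $\bigcup\Gamma$, $\bigcap\Gamma$, and the same lines give $|\bigcup\Gamma|+|\bigcap\Gamma|=2\alpha(G)$, which is \cref{saodi123}. So your route yields that corollary directly, instead of combining the theorem with \cref{1230ij} as the paper does.
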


\begin{theorem}[\label{levit1}\cite{levit2012vertices}]
	For every graph $G$, we have $\textnormal{ker}(G)\subseteq\textnormal{core}(G)$.
\end{theorem}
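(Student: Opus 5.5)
The plan is to show directly that every maximum independent set contains a critical independent set; the inclusion $\ker G\subseteq\core G$ then follows from the definitions. Fix $S\in\Omega(G)$, and take any critical independent set $X$ of $G$. Such a set exists, since $d_I(G)$ is a maximum over the finite nonempty family $\Omega^{*}(G)$; the empty set is independent. I will show that $X\cap S$ is again a critical independent set. Granting this, $\ker G\subseteq X\cap S\subseteq S$, because $\ker G$ is the intersection of all critical independent sets. Since $S\in\Omega(G)$ was arbitrary, $\ker G\subseteq\I\Omega(G)=\core G$.

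\emph{Step 1 (an exchange inequality).} Consider $T=(S\setminus N(X))\cup X$. This set is independent. Both parts are independent, and any edge between a vertex of $S\setminus N(X)$ and a vertex of $X$ would place the first vertex in $N(X)$. Since $X$ is independent, $X\cap N(X)=\emptyset$, and so $|T|=|S|-|S\cap N(X)|+|X\setminus S|$. Maximality of $S$ gives $|T|\le|S|$, that is,
\[
|X\setminus S|\le |S\cap N(X)|.
\]

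\emph{Step 2 (comparing differences).} Write $Z=X\cap S$. Since $S$ is independent, no vertex of $S$ is adjacent to a vertex of $Z\subseteq S$, so $N(Z)\cap S=\emptyset$. Also $N(Z)\subseteq N(X)$. Hence $N(Z)\subseteq N(X)\setminus S$, and therefore
\[
|N(X)|-|N(Z)|\ \ge\ |N(X)\cap S|\ \ge\ |X\setminus S|,
\]
where the last inequality is Step 1. Rearranging,
\[
d_G(Z)=|X|-|X\setminus S|-|N(Z)|\ \ge\ |X|-|N(X)|=d_G(X)=d_I(G).
\]
Since $Z$ is independent, $d_G(Z)\le d_I(G)$. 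So $d_G(Z)=d_I(G)$ and $Z$ is a critical independent set, which completes the argument.

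The only real content is the exchange in Step 1 together with the observation $N(X\cap S)\cap S=\emptyset$; I expect the main care to be in checking that $T$ is independent and that the vertices of $N(X)\cap S$ really are lost when passing from $N(X)$ to $N(X\cap S)$. The argument does not need the equality $d(G)=d_I(G)$ or Larson-type results on extending critical independent sets to maximum ones.
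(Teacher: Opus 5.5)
Your proof is correct. $T=(S\setminus N(X))\cup X$ is independent, and your count of $|T|$ is right because $X\cap S\subseteq S\setminus N(X)$. The inclusion $N(X\cap S)\subseteq N(X)\setminus S$ then gives $d_G(X\cap S)\ge d_G(X)=d_I(G)$. So $X\cap S$ is a critical independent set contained in $S$, which puts $\textnormal{ker}(G)$ inside every $S\in\Omega(G)$.

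The paper gives no proof of this statement; it is quoted from \cite{levit2012vertices}, so there is no internal argument to compare against. In the literature the result is usually reached in two steps, though I cannot confirm this is exactly how \cite{levit2012vertices} argues. First one shows that $\textnormal{ker}(G)$ is itself critical. This uses supermodularity, $d_G(A\cup B)+d_G(A\cap B)\ge d_G(A)+d_G(B)$, together with $d(G)=d_I(G)$ from \cite{zhang1990finding}, which is needed because $A\cup B$ need not be independent. Then one argues that intersecting $\textnormal{ker}(G)$ with a maximum independent set keeps it critical.

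Your route instead proves a stronger closure property: $X\cap S$ is critical for every critical independent $X$ and every $S\in\Omega(G)$. The inclusion then follows directly from the definition of $\textnormal{ker}(G)$ as an intersection, with no appeal to Zhang's theorem. What you do not obtain is the by-product that $\textnormal{ker}(G)$ is the unique inclusion-minimal critical independent set.

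Your Step 1 inequality $|X\setminus S|\le|N(X)\cap S|$ is also what \cref{19} gives for the independent set $X\setminus S$. That set is disjoint from $S$ and can only be matched into $N(X)\cap S$. Your exchange argument just makes this step self-contained.
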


From \cref{1230ij} and \cref{0i1j230i123}, the following follows directly.

\begin{corollary}\label{saodi123}
	If $G$ is a K\H{o}nig-Egerváry graph, then 
\[
\a{\U\Gamma}+\a{\I\Gamma}=2\alpha(G)
\]
\noindent holds for each non-empty $\Gamma\s\Omega(G)$.
\end{corollary}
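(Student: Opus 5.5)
The plan is to chain the two results quoted immediately before the statement, as the paper indicates ("From \cref{1230ij} and \cref{0i1j230i123}, the following follows directly"). Let $G$ be a K\H{o}nig--Egerváry graph. The second item of \cref{0i1j230i123} gives
\[
\a{\core G}+\a{\corona G}=2\alpha(G).
\]
This is precisely the left-hand condition of the equivalence in \cref{1230ij}.

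Applying the forward direction of \cref{1230ij} then yields $\a{\U\Gamma}+\a{\I\Gamma}=2\alpha(G)$ for every non-empty $\Gamma\s\Omega(G)$, which is the claim. There are no further steps.

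The only point requiring care is that the hypotheses match. \cref{1230ij} is stated for an arbitrary graph and needs only the equality for $\corona G$ and $\core G$. Here $\core G=\I\Omega(G)$ and $\corona G=\U\Omega(G)$, so that equality is exactly the case $\Gamma=\Omega(G)$. Moreover $\Omega(G)$ is non-empty because every finite graph has a maximum independent set. Since the argument is a direct composition of two cited results, I expect no real obstacle.
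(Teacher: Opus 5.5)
Your proposal is correct and follows the paper's own argument exactly. The paper also obtains this corollary directly: it combines the second item of \cref{0i1j230i123}, which gives $\left|\textnormal{corona}(G)\right|+\left|\textnormal{core}(G)\right|=2\alpha(G)$ for K\H{o}nig--Egerv\'ary graphs, with the forward direction of \cref{1230ij}.
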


In \cite{larson2011critical}, Larson introduces the following decomposition theorem.
\begin{theorem}[\cite{larson2011critical}\label{larsonthm}]
	For any graph $G$, there is a unique set $L(G)\s V(G)$
	such that
	\begin{enumerate}
		\item $\alpha(G)=\alpha(G[L])+\alpha(G[V(G)-L(G)])$,
		\item $G[L(G)]$ is a König-Egerváry graph,
		\item for every non-empty independent set $I$ in $G[V(G)-L(G)]$, we have $\left|N(I)\right|>\left|I\right|,$
		and
		\item for every maximum critical independent set $J$ of $G$,  $L(G)=J\u N(J)$.
	\end{enumerate}
\end{theorem}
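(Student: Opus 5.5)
The statement has four items plus uniqueness. Uniqueness is automatic once item 4 holds, since item 4 pins $L(G)$ down. So the plan is: fix a maximum critical independent set $J$, put $L=J\cup N(J)$, prove items 1--3 for this $L$, and then show that $J\cup N(J)$ does not depend on the choice of $J$. Throughout I use that $d(G)=d_I(G)$ (Zhang) and that $X\mapsto |N(X)|$ is submodular. Submodularity gives
\[
d_G(A\cup B)+d_G(A\cap B)\ge d_G(A)+d_G(B)
\]
for all $A,B\subseteq V(G)$.

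\emph{Item 2.} First I show there is a matching from $N(J)$ into $J$, using Hall's theorem. Suppose some $X\subseteq N(J)$ had $|N(X)\cap J|<|X|$. Put $J'=J\setminus N(X)$. Then $N(J')\subseteq N(J)\setminus X$, so
\[
d(J')\ge |J|-|N(X)\cap J|-|N(J)|+|X|>d(J),
\]
contradicting criticality of $J$. Hence $\mu(G[L])\ge |N(J)|$ and $\alpha(G[L])\ge |J|$. Since always $\alpha+\mu\le n$, both are equalities, so $G[L]$ is König--Egerváry with $\alpha(G[L])=|J|$.

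\emph{Item 1.} Any independent set $I$ of $G-L$ has no neighbour in $J$, because $N(J)\subseteq L$. So $J\cup I$ is independent and $\alpha(G)\ge |J|+\alpha(G-L)$. For the reverse inequality I invoke the Butenko--Trukhanov theorem: every critical independent set lies in some maximum independent set $S$. Then $S\setminus J$ avoids $N(J)$, so $S\setminus J\subseteq V-L$, giving $\alpha(G)\le |J|+\alpha(G-L)$.

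\emph{Item 3.} Let $I\ne\emptyset$ be independent in $G-L$ and suppose $|N_{G-L}(I)|\le |I|$. Then $J\cup I$ is independent. Since $I$ has no neighbours in $J$, we have $N(J\cup I)\subseteq N(J)\cup N_{G-L}(I)$. Therefore $d(J\cup I)\ge d(J)+|I|-|N_{G-L}(I)|\ge d(J)$. So $J\cup I$ is critical and strictly larger than $J$, contradicting maximality.

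\emph{Item 4 (the main obstacle).} Here I must show $J_1\cup N(J_1)=J_2\cup N(J_2)$ for any two maximum critical independent sets. By symmetry it suffices to prove $J_2\subseteq L_1:=J_1\cup N(J_1)$; then $N(J_2)\subseteq L_1$ also holds. To see this, suppose $v\in N(J_2)\setminus L_1$. Then $v$ has a neighbour $u\in J_2$, so $u\in L_1$ would force $u\in J_1$ and hence $v\in N(J_1)\subseteq L_1$, impossible. Thus $u\in J_2\setminus L_1$, which contradicts $J_2\subseteq L_1$.

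For the inclusion, let $I=J_2\setminus L_1$ and assume $I\neq\emptyset$; I aim to produce $|N_{G-L_1}(I)|\le|I|$, contradicting item 3. The first attempt is to apply the supermodular inequality to $A=J_1$ and $B=J_2$. Since $d(A\cup B)\le d(G)$, this yields that $J_1\cap J_2$ is critical. Next I compare $J_2$ with $J_2\setminus I$. Removing $I$ from $J_2$ loses exactly $|I|$ vertices, together with the neighbours of $I$ that are not neighbours of $J_2\setminus I$. Inside $L_1$ these lost neighbours can be controlled via the matching from $N(J_1)$ into $J_1$. Outside $L_1$ they form a subset of $N_{G-L_1}(I)$. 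Criticality of $J_2$ then bounds $|N_{G-L_1}(I)|$ by $|I|$.

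Carrying out this neighbourhood bookkeeping, which likely needs the set $J_1\cup(J_2\setminus N(J_1))$ as an auxiliary candidate, is where I expect the real difficulty. If it resists, I would fall back on the Gallai--Edmonds style fact that $\ker G$ and the critical sets form a lattice under supermodularity.
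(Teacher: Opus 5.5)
The paper gives no proof of this statement; it is quoted from Larson. Judged on its own, your items 1--3 and the uniqueness remark are correct. The gap is in item 4, which is the real content of the theorem, and it shows up in two places.

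\emph{The reduction step.} Your claim that $J_2\subseteq L_1$ implies $N(J_2)\subseteq L_1$ relies on ``$u\in J_2\cap L_1$ forces $u\in J_1$''. That is false, because $J_2$ may meet $N(J_1)$. In $K_2$ with edge $ab$ we have $d(G)=0$, and both $\{a\}$ and $\{b\}=N(\{a\})$ are maximum critical independent sets. For $u\in J_2\cap N(J_1)$, nothing places its neighbour $v$ inside $L_1$. In fact your argument never uses that $J_2$ is critical, and for a merely independent $J_2\subseteq L_1$ the implication is false.

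\emph{The inclusion.} The inclusion $J_2\subseteq L_1$ itself is not proved. Comparing $J_2$ with $J_2\setminus I$ via criticality of $J_2$ only bounds the \emph{lost} neighbours, namely $N_{G-L_1}(I)\setminus N(J_2\cap N(J_1))$, by $|I|$. Vertices outside $L_1$ adjacent both to $I$ and to $J_2\cap N(J_1)$ escape the count, so $|N_{G-L_1}(I)|\le|I|$ does not follow. Likewise, from supermodularity you extract the half you do not need, namely that $J_1\cap J_2$ is critical.

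\emph{The missing step.} What you need is the other half of the supermodular inequality. Since $J_1\cap J_2$ is independent, $d(J_1\cap J_2)\le d_I(G)$, so supermodularity gives $d(J_1\cup J_2)\ge d(J_1)$. This reads
\[
|N(J_2)\setminus N(J_1)|\le |J_2\setminus J_1|.
\]
Split both sides with respect to $L_1=J_1\cup N(J_1)$. The right side equals $|J_2\setminus L_1|+|J_2\cap N(J_1)|$. The left side is at least $|N(J_2)\setminus L_1|+|N(J_2)\cap J_1|$. The matching from $N(J_1)$ into $J_1$ (your item 2) sends $J_2\cap N(J_1)$ injectively into $N(J_2)\cap J_1$. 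Hence $|N(J_2)\setminus L_1|\le|J_2\setminus L_1|$.

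Because $N_{G-L_1}(J_2\setminus L_1)\subseteq N(J_2)\setminus L_1$, item 3 for $L_1$ forces $J_2\setminus L_1=\emptyset$. The same inequality then gives $N(J_2)\subseteq L_1$, so $L_2\subseteq L_1$, and symmetry yields equality. This one inequality handles both the inclusion and your faulty reduction. No lattice or Gallai--Edmonds fallback is needed.
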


Throughout the remainder of the paper, $L(G)$ and $L^{c}(G)=V(G)-L(G)$ denote the sets
of \cref{larsonthm}; moreover, to simplify the notation, we define the induced graphs
\begin{align*}
	L_{G} & :=G[L(G)],\\
	L_{G}^{c} & :=G[L^{c}(G)].
\end{align*}

\medskip
\medskip

The notion of 2-bicritical graphs was introduced
in \cite{pulleyblank1979minimum}, and they can be characterized as follows.

\begin{theorem}[\cite{pulleyblank1979minimum}\label{1928u3123}]
	A graph $G$ is $2$-bicritical if and only if $\left|N(S)\right|>\left|S\right|$
	for every nonempty independent set $S\subseteq V(G)$.
\end{theorem}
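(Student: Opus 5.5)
The plan is to work directly from Pulleyblank's definition: $G$ is $2$-bicritical if, for every vertex $v\in V(G)$, the graph $G-v$ has a perfect $2$-matching. A perfect $2$-matching is an assignment of weights in $\{0,1,2\}$ to the edges so that every vertex receives total weight exactly $2$. The one external tool is the classical Tutte-type criterion for perfect $2$-matchings: a graph $H$ has a perfect $2$-matching if and only if $\left|N_H(S)\right|\ge\left|S\right|$ for every independent set $S\subseteq V(H)$. The idea is that a perfect $2$-matching is the same as a perfect matching in the bipartite double cover of $H$, and Hall's condition there reduces to independent sets. With this criterion, both directions become a comparison between neighbourhoods in $G$ and in $G-v$.

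For the direction ($\Leftarrow$), assume $\left|N(S)\right|>\left|S\right|$ for every nonempty independent $S$. Fix $v$ and let $S$ be an independent set of $G-v$. Then $S$ is also independent in $G$, and $N_{G-v}(S)=N(S)-\{v\}$. Hence $\left|N_{G-v}(S)\right|\ge\left|N(S)\right|-1\ge\left|S\right|$, where the last step uses the strict inequality when $S\neq\emptyset$; the case $S=\emptyset$ is trivial. By the criterion, $G-v$ has a perfect $2$-matching, so $G$ is $2$-bicritical.

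For the direction ($\Rightarrow$), argue by contraposition. Let $S$ be a nonempty independent set with $\left|N(S)\right|\le\left|S\right|$. There are two cases.
\begin{itemize}
\item If $N(S)\neq\emptyset$, pick $v\in N(S)$. Since $S$ is independent, $v\notin S$, so $S$ is an independent set of $G-v$ with $\left|N_{G-v}(S)\right|=\left|N(S)\right|-1<\left|S\right|$. The criterion then shows that $G-v$ has no perfect $2$-matching.
\item If $N(S)=\emptyset$, then every $u\in S$ is an isolated vertex. Choosing any $v\neq u$, the vertex $u$ is isolated in $G-v$, so $G-v$ has no perfect $2$-matching.
\end{itemize}
In either case $G$ is not $2$-bicritical.

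The main obstacle is not the counting above but the foundational input: the perfect $2$-matching criterion, which I would invoke via Hall's theorem on the bipartite double cover (the vertex set $V\times\{1,2\}$, with $(x,1)$ adjacent to $(y,2)$ whenever $xy\in E$). The delicate part is justifying that Hall's condition need only be checked on independent sets. If $X$ is not independent, it contains adjacent $x,y$; then $N(X)\supseteq X$ in the double cover, which gives $\left|N(X)\right|\ge\left|X\right|$ automatically. Otherwise one passes to the independent part to find the tight violating set. The remaining care is with degenerate conventions, such as $n(G)=1$ (where no $v\neq u$ exists) or empty graphs. I would dispose of these with the convention used in \cite{pulleyblank1979minimum}, under which the empty graph has a perfect $2$-matching and $K_1$ is excluded or handled separately.
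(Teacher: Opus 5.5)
The paper gives no proof of this statement; it is quoted from Pulleyblank, so your proposal has to be judged on its own. Its core is correct and is the standard derivation. From the definition ``$G-v$ has a perfect $2$-matching for every $v$'', Tutte's perfect $2$-matching criterion and the identity $N_{G-v}(S)=N(S)\setminus\{v\}$ give both directions exactly as you wrote them. You are also right that $K_1$ is a genuine exception: $K_1-v$ is the null graph, which has the empty perfect $2$-matching, yet $S=V(K_1)$ violates the inequality. So the statement tacitly assumes $n(G)\ge 2$.

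The step that fails as written is your proof of the sufficiency half of the criterion, which is the half you need for ($\Leftarrow$). It is not true that $N(X)\supseteq X$ as soon as $X$ contains an edge. Take $X=\{x,y,z\}$ with $xy\in E(G)$ and $z$ having no neighbour in $X$; then $z\notin N(X)$. What holds is only that $x\in N(X)$ whenever $N(x)\cap X\neq\emptyset$, so you should split off the isolated vertices of $G[X]$. Put $I=\{x\in X: N(x)\cap X=\emptyset\}$. Then:
\begin{itemize}
\item $I$ is independent, since two adjacent vertices of $I$ would have neighbours in $X$;
\item $X\setminus I\subseteq N(X)$;
\item $N(I)\cap X=\emptyset$, directly from the definition of $I$.
\end{itemize}
Hence $N(I)$ and $X\setminus I$ are disjoint subsets of $N(X)$, and
\[
|N(X)|\ge |N(I)|+|X\setminus I|\ge |I|+|X\setminus I|=|X|,
\]
which is Hall's condition for $X\times\{1\}$ in the bipartite double cover.

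With this replacement, or simply by citing Tutte's perfect $2$-matching theorem (e.g.\ from Lov\'asz--Plummer), your proof is complete. Your remaining ingredient is fine: perfect $2$-matchings of a simple graph correspond to perfect matchings of the double cover, via the cycle decomposition of a fixed-point-free permutation $\sigma$ with $x\sigma(x)\in E(G)$ for all $x$.
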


The class of $2$-bicritical graphs can be regarded as the structural counterpart of König--Egerváry graphs \cite{larson2011critical}. In recent works, several new properties of $2$-bicritical graphs have been established; see, for instance, \cite{kevinSDKECHAR, kevinSDKEGE, kevinPOSYFACTOR}.

\medskip

\begin{observation}
	By \cref{larsonthm}, for every graph $G$ with $L^c(G)\neq \emptyset$, it follows that $L_{G}^{c}$ is a 2-bicritical graph.
\end{observation}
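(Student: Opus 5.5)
This observation follows directly from item~3 of \cref{larsonthm} together with the characterization of \cref{1928u3123}, so the plan is simply to verify the hypothesis of \cref{1928u3123} for the graph $L_{G}^{c}$.

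First I take an arbitrary nonempty independent set $S$ of $L_{G}^{c}$. Since $L_{G}^{c}=G[L^{c}(G)]$ is an induced subgraph, $S$ is also a nonempty independent set of $G$ contained in $V(G)-L(G)$. Item~3 of \cref{larsonthm} then gives $\a{N(S)}>\a{S}$. The one point needing care is \emph{which} neighborhood item~3 refers to. To apply \cref{1928u3123} to $L_{G}^{c}$ I need $\a{N_{L_{G}^{c}}(S)}>\a{S}$, and in general $N_{L_{G}^{c}}(S)=N_G(S)\cap L^{c}(G)$ may be strictly smaller than $N_G(S)$.

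I will therefore read item~3 as it is intended in Larson's decomposition, namely with the neighborhood taken inside $G[V(G)-L(G)]$; the statement itself says ``independent set $I$ in $G[V(G)-L(G)]$''. This is the main obstacle, and I would settle it by checking Larson's original statement rather than arguing it afresh. If needed, it can also be justified from item~4. Let $J$ be a maximum critical independent set with $L=J\cup N(J)$. Then every vertex of $L^c$ has no neighbor in $J$. If some independent $S\subseteq L^c$ had $\a{N(S)\cap L^c}\le\a{S}$, then $J\cup S$ would be independent, and
\[
d_G(J\cup S)\ge\a J+\a S-\a{N(J)}-\a{N(S)\cap L^c}\ge d_G(J).
\]
Here I use that $N(S)\cap L\subseteq N(J)$, because $S$ has no neighbors in $J$. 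So $J\cup S$ would be a critical independent set strictly larger than $J$, contradicting the maximality of $J$.

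With the strict inequality $\a{N_{L_{G}^{c}}(S)}>\a S$ established for every nonempty independent $S$ in $L_{G}^{c}$, \cref{1928u3123} gives that $L_{G}^{c}$ is $2$-bicritical whenever $L^c(G)\neq\emptyset$. The nonemptiness hypothesis is needed only so that the graph is nontrivial.
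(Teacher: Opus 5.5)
Your proposal is correct and follows the same route as the paper, which obtains the observation by combining item~3 of \cref{larsonthm} with the characterization in \cref{1928u3123}. Your extra derivation from item~4 is also sound: you adjoin $S$ to a maximum critical independent set $J$ and use $N(S)\cap L(G)\subseteq N(J)$. It settles a point the paper leaves implicit, namely that the strict inequality holds with neighborhoods taken inside $L_{G}^{c}$.
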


\medskip
\medskip

Let $\mathcal{G}$ be the family of graphs satisfying $\a{\corona G}+\a{\core G}=2\alpha(G)+1$,
that is,
\[
\mathcal{G}:=\left\{ G:\a{\corona G}+\a{\core G}=2\alpha(G)+1\right\}.
\]

The following result provides a reduction-type solution to
\cref{mainproblem}. In brief, a graph satisfies $\a{\corona G}+\a{\core G}=2\alpha(G)+1$
if and only if its $L^{c}$-part also satisfies it. Therefore, a structural
solution to \cref{mainproblem} essentially reduces to studying
2-bicritical graphs that satisfy the equality.

\medskip

\begin{theorem}\label{120ij31}
	A graph satisfies $G\in\mathcal{G}$ if and only
	if $L_{G}^{c}\in\mathcal{G}.$ 
\end{theorem}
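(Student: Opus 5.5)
The plan is to show that $\Omega(G)$ splits as a product along the decomposition $V(G)=L(G)\cup L^{c}(G)$, and then to add the corona/core counts of the two parts. Write $L=L(G)$, $L^{c}=L^{c}(G)$. By item 1 of \cref{larsonthm}, $\alpha(G)=\alpha(L_G)+\alpha(L_G^c)$. The first goal is
\[
\Omega(G)=\{A\cup B: A\in\Omega(L_G),\ B\in\Omega(L_G^c)\}.
\]
One inclusion is immediate: if $S\in\Omega(G)$, then $S\cap L$ and $S\cap L^{c}$ are independent in their parts. Their sizes add to $\alpha(L_G)+\alpha(L_G^c)$, so each has maximum size.

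The reverse inclusion is the main obstacle, since an edge between $A\subseteq L$ and $B\subseteq L^{c}$ could make $A\cup B$ dependent. I would use item 4 of \cref{larsonthm}. Fix a maximum critical independent set $J$, so that $L=J\cup N(J)$.

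First I would show $J\subseteq A$ for every $A\in\Omega(L_G)$. A maximum critical independent set $J$ of $G$ is also critical in $L_G$, because $N_{L_G}(J)=N(J)$. Since $L_G$ is König--Egerváry, the set $J$ is contained in some maximum independent set. I would then argue, via Hall's condition along a matching from $N(J)$ into $J$ (such a matching exists by criticality of $J$), that $\alpha(L_G)=|J|$. The reason is that $L=J\cup N(J)$ and $N(J)$ matches into $J$, so any independent set of $L_G$ has size at most $|J|$. Concretely, a matching $M$ saturating $N(J)$ into $J$ covers $N(J)$ by $|N(J)|$ edges. Any independent $A$ uses at most one endpoint of each matched edge, and at most all of the unmatched vertices of $J$. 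Hence $|A|\le |J|$, so $\alpha(L_G)=|J|$.

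Next, every $A\in\Omega(L_G)$ satisfies $N(A)\cap L^{c}=\emptyset$. Vertices of $N(J)$ may have neighbors in $L^{c}$, but vertices of $J$ do not, since $N(J)\subseteq L$. If $A$ contains some vertices $X\subseteq N(J)$, then I would argue that $A$ is again critical independent in $G$ with $L=A\cup N(A)$. This uses the fact that every maximum independent set of the König--Egerváry graph $L_G$ here is critical in $G$: we have $d_G(A)\ge |A|-|L-A|=|J|-|N(J)|=d(G)$, provided $N(A)\subseteq L-A$, i.e.\ provided $A$ has no neighbors in $L^{c}$. This step is circular, so I would instead go through the $A\cup B$ independence directly. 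Suppose $a\in A$ and $b\in B$ are adjacent. Consider the independent set $I=B\cap N(A)\neq\emptyset$, and compare $|N(I)|$ with $|I|$. Item 3 of \cref{larsonthm} gives $|N_{L^c}(I)|>|I|$. The intended way to close this is an exchange argument: use a maximum matching of $L_G$ together with the criticality of $J$ to show that $J\cup I$, or a modification of it, would be critical and strictly larger than $J$, contradicting the maximality of $J$. If this fails, I would fall back on the known fact that $\Omega(G)$ restricted to $L$ equals $\Omega(L_G)$, together with the result that every maximum independent set of $G$ contains a maximum critical independent set.

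Given the product structure, we get $\corona G=\corona{L_G}\cup\corona{L_G^c}$ and $\core G=\core{L_G}\cup\core{L_G^c}$, both as disjoint unions. By \cref{0i1j230i123}, $|\corona{L_G}|+|\core{L_G}|=2\alpha(L_G)$. Subtracting this from the sum for $G$ gives
\[
|\corona G|+|\core G|-2\alpha(G)=|\corona{L_G^c}|+|\core{L_G^c}|-2\alpha(L_G^c),
\]
so $G\in\mathcal{G}$ if and only if $L_G^c\in\mathcal{G}$. If $L^c=\emptyset$, then $G$ is König--Egerváry and neither graph lies in $\mathcal{G}$, so the equivalence holds trivially under the convention that the empty graph satisfies $0+0=0\neq1$.
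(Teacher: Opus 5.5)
\textbf{There is a genuine gap.} Your argument rests on the product decomposition $\Omega(G)=\{A\cup B: A\in\Omega(L_G),\ B\in\Omega(L_G^c)\}$. This is false in general, so the exchange argument you leave open cannot be completed.

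Take the graph of \cref{123129iu3}: $K_4$ on $\{a,b,c,d\}$ together with a vertex $e$ adjacent only to $b$. Here $d(G)=0$, and the only critical independent sets are $\emptyset$ and $\{e\}$. So $J=\{e\}$, $L(G)=\{e,b\}$, and $L_G^c$ is the triangle $acd$. Now $\{b\}\in\Omega(L_G)$ and $\{a\}\in\Omega(L_G^c)$, but $ab\in E(G)$; indeed $\Omega(G)=\{\{e,a\},\{e,c\},\{e,d\}\}$.

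The same example refutes your auxiliary claims:
\begin{itemize}
\item $J\not\subseteq\{b\}$, so $J\subseteq A$ fails for some $A\in\Omega(L_G)$.
\item $\{S\cap L(G): S\in\Omega(G)\}=\{\{e\}\}\neq\Omega(L_G)$, so the fallback ``known fact'' is not available either.
\item The set identities $\corona{G}=\corona{L_G}\cup\corona{L_G^c}$ and $\core{G}=\core{L_G}\cup\core{L_G^c}$ fail: $\core{G}=\{e\}$ while the right-hand side is empty, and $\corona{G}=\{a,c,d,e\}\neq V(G)$.
\end{itemize}
Your final numerical identity happens to be true, but it does not follow from what you wrote.

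The repair, which is the paper's route, uses only the one-sided extension you essentially already have.
\begin{itemize}
\item \emph{The $L^c$-side.} You showed $|J|=\alpha(L_G)$ (your matching argument is fine), and $N(J)\subseteq L(G)$. Hence $J\cup B$ is independent of size $\alpha(L_G)+\alpha(L_G^c)=\alpha(G)$ for every $B\in\Omega(L_G^c)$. Combined with your easy inclusion, this gives $\{S\cap L^c(G):S\in\Omega(G)\}=\Omega(L_G^c)$. Therefore $L^c(G)\cap\corona{G}=\corona{L_G^c}$ and $L^c(G)\cap\core{G}=\core{L_G^c}$.
\item \emph{The $L$-side.} You cannot use \cref{0i1j230i123}, which concerns all of $\Omega(L_G)$. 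Instead apply \cref{saodi123} to the nonempty family $\Gamma=\{S\cap L(G):S\in\Omega(G)\}\subseteq\Omega(L_G)$. This yields $|L(G)\cap\corona{G}|+|L(G)\cap\core{G}|=2\alpha(L_G)$.
\end{itemize}
With these two facts, your final subtraction goes through.
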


\begin{proof}	
	Let $I$ be a maximum critical independent set. Then,
	by \cref{larsonthm}, $L(G)=I\cup N(I)$. 
	Since $\alpha(G)=\alpha(L_{G})+\alpha(L_{G}^{c})$, there exists $T\in\Omega(G)$
	such that $T\cap L(G)=I$, where $|I|=\alpha(L_{G})$ by \cref{larsonthm}.
	Then $T\cap L^{c}(G)$ can be any set from $\Omega(L_{G}^{c})$,
	in other words.
\begin{align*}
	\a{\corona G} & =\a{\U_{S\in\Omega(G)}S}\\
	& =\a{L(G)\ii\U_{S\in\Omega(G)}S}+\a{L^{c}(G)\ii\U_{S\in\Omega(G)}S}\\
	& =\a{L(G)\ii\U_{S\in\Omega(G)}S}+\a{\corona{L_{G}^{c}}}.
\end{align*}
\noindent Similarly, we show that 
\begin{align*}
	\a{\core G} & =\a{\I_{S\in\Omega(G)}S}\\
	& =\a{L(G)\ii\I_{S\in\Omega(G)}S}+\a{L^{c}(G)\ii\I_{S\in\Omega(G)}S}\\
	& =\a{L(G)\ii\I_{S\in\Omega(G)}S}+\a{\core{L_{G}^{c}}}.
\end{align*}
\noindent Therefore,
\begin{eqnarray*}
	\a{\corona G}+\a{\core G} & = & \a{\U_{S\in\Omega(G)}S}+\a{\I_{S\in\Omega(G)}S}\\
	& = & \a{L(G)\ii\U_{S\in\Omega(G)}S}+\a{\corona{L_{G}^{c}}}\\
	&  & +\a{L(G)\ii\I_{S\in\Omega(G)}S}+\a{\ker{L_{G}^{c}}}.
\end{eqnarray*}
\noindent Since $\alpha(G)=\alpha(L_{G})+\alpha(L_{G}^{c})$,
by \cref{larsonthm} $L_G$ is a Kőnig--Egerváry graph, and then by \cref{saodi123} we have
\[
\a{L(G)\ii\U_{S\in\Omega(G)}S}+\a{L(G)\ii\I_{S\in\Omega(G)}S}=2\alpha(L_{G}).
\]
\noindent Finally, we obtain that
\begin{eqnarray*}
	\a{\corona G}+\a{\core G} & = & 2\alpha(L_{G})+\a{\corona{L_{G}^{c}}}+\a{\ker{L_{G}^{c}}}.
\end{eqnarray*}

\medskip
\medskip

Now, if $G\in\mathcal{G}$, then recalling that $\alpha(G)=\alpha(L_{G})+\alpha(L_{G}^{c})$,
we obtain the following:
\begin{align*}
	2\alpha(L_{G})+\a{\corona{L_{G}^{c}}}+\a{\ker{L_{G}^{c}}} & =2\alpha(G)+1\\
	& =2\alpha(L_{G})+2\alpha(L_{G}^{c})+1.
\end{align*}
\noindent This reduces to $L_{G}^{c}\in\mathcal{G}$. 

\medskip
\medskip

Conversely, if $L_{G}^{c}\in\mathcal{G}$, then
\[
\a{\corona G}+\a{\core G}-2\alpha(L_{G})=2\alpha(L_{G}^{c})+1
\]
\noindent which again reduces to $G\in\mathcal{G}$. 
\end{proof}

\medskip
\medskip

\cref{120ij31} shows that the family $\mathcal{G}$ is completely determined by the
structure of $L^c_G$. Therefore, to understand $\mathcal{G}$ it is enough to focus on the $2$-bicritical graphs in $\mathcal{G}$,
that is, on the possible structures of $L^{c}_G$.
To this end, we recall a classical constructive description of $2$-bicritical graphs via ear--pendant decompositions.

\medskip

To analyze the possible structures of $L^c_G$, we rely on classical constructive
descriptions of 2-bicritical graphs.

We say that $G^{\P}$ is an odd homeomorph of $G$ if $G^{\P}$ is
obtained by replacing each edge of $G$ with a path of odd length,
such that these paths remain internally disjoint. Let $B$ be a subgraph
of $G$. An ear in $G$ with respect to $B$ is an odd-length path
in $G$ whose endpoints lie in $B$, but whose internal vertices
lie outside $B$, and in which all internal vertices are distinct.
A pendant in $G$ with respect to $B$ consists of an odd-length simple cycle
$C$ in $G$, vertex-disjoint from $B$, together with a positive-length
simple path with one end in $C$ and the other in $B$, with all other
vertices lying outside both $B$ and $C$. The vertex in $B$ is called
the end of the pendant.

An \emph{ear-pendant} decomposition of a graph $G$ is a sequence
$G_{0},G_{1},\dots,G_{p}=G$ of graphs where $G_{0}$ is either an
odd cycle or an odd homeomorph of $K_{4}$, and for each $i\in\{1,\dots,p\}$,
$G_{i}$ is obtained from $G_{i-1}$ by adding either an ear or a
pendant. In \cite{bourjolly1989konig} the following characterization of 2-bicritical
graphs in terms of ear-pendant decompositions was given.

\begin{theorem}[\label{puyelarpendietnedesc}\cite{bourjolly1989konig}]
	A graph is 2-bicritical if and only if it has an ear-pendant decomposition.
\end{theorem}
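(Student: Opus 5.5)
This is Theorem~\ref{puyelarpendietnedesc}, the ear--pendant characterization of 2-bicritical graphs; I will use \cref{1928u3123} as the working definition: $G$ is 2-bicritical if and only if $\a{N(S)}>\a{S}$ for every nonempty independent $S$.

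\emph{Sufficiency.} I argue by induction along an ear--pendant decomposition $G_0,\dots,G_p$.

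For the base case, first take $G_0$ an odd cycle $C$ of length $2k+1$. A nonempty independent $S$ with $\a{N(S)}=\a{S}$ would force the closed walk to alternate between $S$ and $N(S)$, which is impossible on an odd cycle. The odd homeomorph of $K_4$ is handled similarly. By Hall's theorem, the condition $\a{N(S)}>\a{S}$ for all nonempty independent $S$ is equivalent to: every vertex $v$ can be deleted and the remaining graph still has a ``fractional perfect matching''. Equivalently, $G-v$ has a spanning subgraph whose components are single edges and odd cycles. For the $K_4$ homeomorph I exhibit such a subgraph for each $v$ directly, using one of the odd cycles of the homeomorph and pairing the subdivision paths.

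For the inductive step, suppose $G_{i-1}$ is 2-bicritical and $G_i$ adds an ear $P$ (odd length, with an even number of internal vertices) or a pendant. Let $S$ be independent in $G_i$. Split it as $S=S_1\cup S_2$, where $S_1=S\cap V(G_{i-1})$ and $S_2$ lies in the new vertices.
\begin{itemize}
\item On an ear, consecutive new vertices can be matched along the path, so $\a{N(S_2)\setminus V(G_{i-1})}\ge \a{S_2}$.
\item On a pendant, the odd cycle contributes a strict surplus whenever $S_2$ meets it.
\end{itemize}
If $S_1\ne\emptyset$, the induction hypothesis gives $\a{N_{G_{i-1}}(S_1)}>\a{S_1}$, and the neighbourhoods of the two parts are then counted essentially disjointly. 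The delicate part is the overlap at the endpoints of the ear: an endpoint lying in $N(S_2)$ may also be counted in $N(S_1)$. I would handle this by exploiting that the path has odd length, so the endpoints can be charged alternately. If instead $S_1=\emptyset$, then strictness comes from the ear's endpoints or from the pendant's odd cycle.

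\emph{Necessity.} This direction is the real obstacle. Given a 2-bicritical $G$, I take a maximal subgraph $H$ admitting an ear--pendant decomposition. Such an $H$ exists because $G$ is not bipartite: a bipartite graph has a color class $S$ with $\a{N(S)}\le\a{S}$, violating the condition. So $G$ contains an odd cycle, which serves as $G_0$.

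I then suppose $H\ne G$ and seek an ear or pendant to add. If some vertex outside $H$ has no attached structure, consider $S=$ the relevant color class of a bipartite piece outside $H$. The 2-bicriticality inequality should then force either an odd cycle disjoint from $H$ together with a path to $H$ (a pendant), or an odd path between two vertices of $H$ (an ear).

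The hardest case is when every path outside $H$ between $H$-vertices is even. Here I would take the bipartition induced by the parity of distance to $H$, and take $S$ to be the side opposite $H$'s attachment points. This should give $\a{N(S)}\le\a{S}$, a contradiction. Making this parity argument rigorous, including the case where some added edges have both endpoints in $H$, is where I expect most of the work.
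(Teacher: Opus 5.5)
The paper does not prove this statement; it quotes it from Bourjolly and Pulleyblank, so your proposal has to stand on its own. Your sufficiency direction is essentially fine. The ``delicate overlap'' does not actually arise: for $S_1\neq\emptyset$ you can count $N_{G_{i-1}}(S_1)\subseteq V(G_{i-1})$ and $N(S_2)$ among the new vertices separately. This works because the new vertices of an ear have a perfect matching and those of a pendant have a perfect 2-matching.

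The genuine gap is in necessity, and it is not a matter of rigor: the dichotomy you rely on is false. You claim that if a decomposable $H\neq G$ admits no ear or pendant, then some nonempty independent $S$ has $|N(S)|\le|S|$. (In that situation the outside is bipartite and every excursion from $H$ back to $H$ is even.)

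Take $G=K_4$ and $H$ a triangle. The fourth vertex only gives paths of length $2$ between vertices of $H$, and there is no odd cycle outside $H$. Yet $K_4$ is 2-bicritical, so no violating $S$ exists; your parity choice gives $S=\{4\}$, which has three neighbours, or $S=\emptyset$.

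Likewise, let $G$ be $K_4$ on $\{1,2,3,4\}$ plus a vertex $x$ adjacent to $1$ and $2$, and let $H=K_4$. Then $G$ is 2-bicritical, no ear or pendant extends $H$, and no decomposition of $G$ passes through $H$. One has to restart, e.g.\ from the $5$-cycle $x,1,3,4,2$ followed by three ears of length $1$.

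So in the stuck case, what must be proved is not a contradiction. It is the existence of a decomposable subgraph strictly containing $H$, possibly with a different initial graph $G_0$. This is exactly why odd homeomorphs of $K_4$ are allowed as $G_0$, and your argument never produces one. The missing ingredient is a global structural argument that exploits the maximality of $H$ and allows the decomposition to be re-rooted, not a local parity count. One possible route is via perfect 2-matchings of $G-v$ and alternating paths relative to $H$.

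There are two further problems. First, even when a violating set exists, it generally cannot be found among the outside vertices alone. For a triangle $abc$ plus a vertex $x$ adjacent to $a$ and $b$, with $H$ the triangle, the only violating set is $\{x,c\}$, which uses a vertex of $H$. Choosing ``a colour class of a bipartite piece outside $H$'' therefore requires information about tight sets inside $H$, which your sketch does not provide.

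Second, necessity needs hypotheses your argument never invokes:
\begin{itemize}
\item $G$ must be connected. Two disjoint triangles are 2-bicritical, while every ear--pendant decomposition yields a connected graph.
\item An ear must be allowed to start and end at the same vertex of $H$. The bowtie (two triangles sharing a vertex) is 2-bicritical but has no decomposition otherwise, so your case analysis of excursions from $H$ must include closed ones.
\end{itemize}
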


Using this approach, we will obtain \cref{asd12w3123} as a consequence of \cref{120ij31}. Ear--pendant decompositions provide a convenient way to isolate odd cycles in $L^c(G)$,
which will be exactly what we need in the almost bipartite setting.

\begin{proof}[Proof of \cref{asd12w3123}]
	Let $G$ be an almost bipartite non-K\H{o}nig--Egerváry graph. Then,
	by \cref{larsonthm}, $L(G)\neq\emptyset$. Hence $L_{G}^{c}$
	is a 2-bicritical graph with a unique odd cycle. By \cref{puyelarpendietnedesc},
	$L_{G}^{c}$ is exactly the odd cycle of $G$. But then,
	trivially $L_{G}^{c}\in\mathcal{G}$, and thus by \cref{120ij31} we have $G\in\mathcal{G}.$ 
\end{proof}

In the next section, using \cref{120ij31}, we will structurally extend
\cref{asd12w3123}.

\section{Graphs with many odd cycles}\label{sss3}

In this section, we show how the reductive characterization obtained in
\cref{sss2} can be exploited to extend structural properties known for almost
bipartite non-K\H{o}nig--Egerv\'ary graphs to graphs containing several odd cycles.

\medskip

Ear decompositions play a fundamental role in the structural theory of graphs
related to matching properties. An ear decomposition $G_0,G_1,\dots,G_k=G$ of a graph $G$
is a sequence of graphs with the first graph being simple (e.g., a
vertex, edge, even cycle, or odd cycle), and each graph $G_{i+1}$ 
is obtained from $G_i$ by adding an ear. Adding an ear is done
as follows: take two vertices $u$ and $v$ of $G_i$ and add a
path $P_i$ from $u$ to $v$ such that all vertices on the path
except $u$ and $v$ are new vertices.

\begin{theorem}
	[\cite{lovasz1972structure}\label{lovasz}] A graph $G$ is factor-critical
	if and only if $G$ has an odd-length ear decomposition starting from
	an odd cycle. 
\end{theorem}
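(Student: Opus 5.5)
The plan is to prove the two directions separately, sufficiency by induction on the number of ears and necessity by growing a maximal subgraph that already has an odd ear decomposition. Throughout, an odd ear of length $2k+1$ has $2k$ internal vertices, an even number. This parity fact drives both directions.

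\emph{Sufficiency.} Let $G_0,\dots,G_k=G$ be an odd ear decomposition with $G_0$ an odd cycle. I will show by induction on $i$ that each $G_i$ is factor-critical. An odd cycle is clearly factor-critical. Suppose $G_i$ is factor-critical and $G_{i+1}=G_i\cup P$, where $P=u x_1\cdots x_{2k} w$ (possibly $u=w$).
\begin{itemize}
\item If the removed vertex $v$ lies in $G_i$, take a perfect matching of $G_i-v$ and add the edges $x_1x_2,\dots,x_{2k-1}x_{2k}$.
\item If $v=x_j$ with $j$ odd, match $x_1,\dots,x_{j-1}$ consecutively; there are $j-1$ of them, an even number. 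Then match $x_{j+1},\dots,x_{2k},w$ consecutively, which is again an even number of vertices. Finish with a perfect matching of $G_i-w$.
\item The case $j$ even is symmetric: use $u$ and a perfect matching of $G_i-u$.
\end{itemize}
When $u=w$ the same recipe works, since only one endpoint is ever consumed.

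\emph{Necessity.} Let $G$ be factor-critical. First, $G$ is connected. Otherwise $n(G)$ is odd, so some component $C$ has an even number of vertices; deleting a vertex of $C$ leaves $C$ minus that vertex with an odd number of vertices, so $G-v$ has no perfect matching. Also $G$ has an edge unless $G=K_1$, which is a degenerate case I will set aside.

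Next I need a starting odd cycle. Take an edge $uv$ and a perfect matching $M$ of $G-v$. Follow the $M$-alternating walk from $u$ through $M(u)$ and onward. Because $G-u$ also has a perfect matching, the symmetric difference of the two near-perfect matchings yields an even alternating path from $u$ to $v$. Closing it with the edge $uv$ gives an odd cycle $G_0$.

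Then let $H\supseteq G_0$ be a maximal subgraph of $G$ admitting an odd ear decomposition starting from an odd cycle. By sufficiency $H$ is factor-critical. I claim $H=G$. If not, connectivity gives an edge $xy\notin E(H)$ with $x\in V(H)$, where either $y\notin V(H)$ or $y\in V(H)$ (a chord).
\begin{itemize}
\item A chord $xy$ would have to be added as an ear of length one; an odd ear of length $1$ is just an edge, so this is always allowed, contradicting maximality.
\item If $y\notin V(H)$, take a perfect matching $M$ of $G-x$. Walk from $y$ along $M$-edges alternately with non-matching edges, starting $y,M(y),\dots$, until the walk first re-enters $V(H)$ at a vertex $z$.
\end{itemize}

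The main obstacle is guaranteeing that this walk produces a genuine odd ear.
\begin{itemize}
\item The walk must terminate inside $V(H)$ rather than cycling outside.
\item Its length from $x$ to $z$ must be odd.
\item It must be a path, or else a pendant-like closed piece must be re-routed.
\end{itemize}
To handle this, I will use the symmetric difference $M\triangle M'$, where $M'$ is a perfect matching of $G-y$. Its component containing $x$ is an even alternating path from $x$ to $y$. Since $H$ is factor-critical, one can also choose $M'$ so that $M'$ restricted to $V(H)-x$ is a perfect matching of $H-x$. Under that choice, the first re-entry into $H$ happens along a non-matching edge after an even number of outside vertices, which gives odd length. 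The resulting ear enlarges $H$, contradicting maximality, and so $H=G$.
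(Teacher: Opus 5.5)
The paper only quotes this theorem from Lov\'asz and gives no proof, so I compare against the standard argument. Your sufficiency direction and your initial odd cycle are correct. The necessity step for an edge $xy$ with $y\notin V(H)$, however, has a genuine gap.

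Your construction of the new ear uses only that $H$ is factor-critical, never that it is maximal. If it worked, every proper subgraph with an odd ear decomposition could be extended by a single odd ear, and that is false. Let $G$ be the $5$-cycle $x\,y\,a\,b\,x'$ with chords $xa$ and $xb$; it is factor-critical, being $C_5$ plus chords. Let $H$ be the triangle $xab$. The only ears of $G$ relative to $H$ are $x\,y\,a$ and $x\,x'\,b$, both of length $2$.

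Tracing your argument on this example:
\begin{itemize}
\item $M'=\{ab,xx'\}$ is a perfect matching of $G-y$ with exactly the restriction you ask for.
\item $M=\{ya,bx'\}$ is the unique perfect matching of $G-x$.
\item The walk from $y$ re-enters $H$ at once along the $M$-edge $ya$, giving the even ear $x\,y\,a$.
\end{itemize}
In general nothing stops the walk from re-entering $H$ along an $M$-edge; then the segment from $y$ is odd and the ear is even. You have constrained the wrong matching.

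The claimed choice of $M'$ is also not implied by factor-criticality of $H$. Take the triangle $xab$ plus the ear $x\,y\,z\,a$. The only perfect matching of $G-y$ is $\{xb,za\}$, which contains no edge of $H-x$.

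The missing idea is an invariant carried through the whole construction. Fix a root $r$ and one perfect matching $M$ of $G-r$, and require that no edge of $M$ joins $V(H)$ to $V(G)\setminus V(H)$.
\begin{itemize}
\item Your initial cycle satisfies this with $r=u$ and $M$ the perfect matching of $G-u$. The $M$-edges of the alternating path pair off all of its vertices except $u$.
\item Given $xy$ with $x\in V(H)$ and $y\notin V(H)$, let $N$ be a perfect matching of $G-y$. The component of $M\triangle N$ containing $y$ is a path from $y$ to $r$ that starts with an $M$-edge.
\item Let $w$ be the first vertex of this path in $V(H)$. The edge entering $w$ crosses the boundary, so by the invariant it lies in $N$. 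Hence the subpath from $y$ to $w$ has even length, and adding $xy$ gives an odd ear (closed if $w=x$).
\item The $M$-edges of that subpath pair off the new internal vertices, so the invariant persists.
\end{itemize}
Handle chords as you do, and take $H$ maximal among subgraphs having such a decomposition together with this invariant. The triangle in the example above violates the invariant, since $y$ and $x'$ are not adjacent.

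A smaller slip: odd order does not force an even component, since there could be three odd ones. Argue instead with two distinct components $C_1,C_2$. Deleting a vertex of $C_2$ forces $C_1$ to have a perfect matching, so $C_1$ is even. Deleting a vertex of $C_1$ forces $C_1$ to be odd. This is a contradiction.
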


The study of bipartite matching covered graphs has a long history: K\H{o}nig already
used this concept in 1915 while analyzing determinant decompositions
\cite{konig1915}, and nearly half a century later Hetyei formalized the term
\emph{elementary} instead bipartite matching covered and developed a classical characterization \cite{hetyei1964}.
It should be noted that several results concerning matching covered graphs,
such as those in \cite{LP}, are presented in the more general context of bipartite
matching covered graphs.

\medskip

In this paper, however, we will work with the following equivalent notion,
which is more convenient for our purposes. A graph $G$ is called a \emph{bipartite matching covered graph} if $G$ is
connected, bipartite, and every edge of $G$ belongs to a perfect matching.

\medskip

Bipartite matching covered graphs admit a particularly elegant structural
description in terms of ear decompositions, due to Lovász.

\begin{theorem}[\cite{lovasz1983ear}]
	 Given any bipartite matching covered graph $G$,
there exist a odd length ear decomposition
\[
G_{1},G_{2},\dots,G_{r}=G
\]
\noindent of matching covered subgraphs of $G$ where $G_{1}=K_{2}$.
\end{theorem}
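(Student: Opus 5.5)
The plan is to prove the statement by induction on the number of ears, carrying two invariants: bipartite and matching covered. For the base case, $G_1$ will be a single edge $uv$ that lies in some perfect matching of $G$.

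Start from a nontrivial bipartite matching covered $G$ (the case $G=K_2$ is immediate). Choose any edge $e=uv$ and a perfect matching $M$ of $G$ containing $e$, and set $G_1=K_2$ on $\{u,v\}$. At each stage we will have a matching covered subgraph $H=G_i$ that is nice, meaning $G-V(H)$ has a perfect matching. As long as $H\neq G$, we want an odd path $P$ whose ends lie in $H$ and whose interior avoids $H$, such that $H+P$ is again matching covered and nice. Since $G$ is bipartite, an odd path joins vertices of opposite colour classes, so $H+P$ stays bipartite, and a perfect matching of $H$ extends along the alternate edges of $P$. This will give the required odd ear decomposition.

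The key step is finding the ear. If some edge of $G$ outside $H$ has both ends in $H$, it is a single-edge ear. Since $G$ is bipartite its ends have opposite colours, and matching coveredness of $H+e$ should follow from the fact that $G$ is elementary, i.e.\ every edge lies in a perfect matching of a suitable nice subgraph. Otherwise, use a perfect matching $M$ of $G$ that restricts to perfect matchings of $H$ and of $G-V(H)$. By connectivity there is an edge $xy$ with $x\in V(H)$ and $y\notin V(H)$. Take a perfect matching $M'$ of $G$ containing $xy$. Then $M\triangle M'$ contains an alternating cycle through $xy$. That cycle leaves $H$ through $xy$ and must re-enter $H$. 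The segment outside $H$ starts and ends with edges of $M'$ that are not in $M$, and its interior alternates, so it has odd length. It is therefore an odd ear $P$, and $M\triangle P$-type switching shows $G-V(H+P)$ still has a perfect matching, namely $M$ restricted outside.

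The main obstacle is checking that $H+P$ is matching covered, i.e.\ that every edge lies in a perfect matching of $H+P$. For edges of $P$ I would take two perfect matchings: the one obtained by extending a perfect matching of $H$ by alternate edges of $P$, and the one that uses the other alternate edges of $P$ (including the two end edges) together with a perfect matching of $H-\{a,b\}$. Here $a,b$ are the ends of $P$, which lie in opposite classes. This second matching requires $H-a-b$ to have a perfect matching, which holds in a bipartite matching covered graph for any $a,b$ of opposite colours by Hall's condition and connectivity (the standard fact that $|N(X)|\ge|X|+1$ for proper nonempty $X$ in one class). Edges of $H$ are covered by extending their perfect matchings in $H$ along $P$. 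Finally, the number of vertices of $H$ strictly increases whenever $P$ has interior vertices, and the number of edges increases otherwise, so the process terminates at $G$.
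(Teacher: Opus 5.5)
The paper gives no proof of this statement; it quotes it from Lov\'asz, so there is no internal argument to compare against. Your proposal is a correct, self-contained proof, and it is essentially the classical argument from matching theory:
\begin{itemize}
\item grow a nice matching covered subgraph $H$;
\item take a perfect matching $M$ that splits into perfect matchings of $H$ and of $G-V(H)$, so that only $M'$-edges can cross the boundary;
\item take the next ear to be the first excursion outside $H$ of the $M\triangle M'$ alternating cycle through a crossing edge $xy$;
\item verify that $H+P$ is matching covered using the fact that $H-a-b$ has a perfect matching whenever $a,b$ lie in opposite colour classes of $H$.
\end{itemize}
That last fact does follow from Hall's theorem together with $|N_H(X)|\ge|X|+1$ for nonempty proper $X$ in one class. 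The latter holds because otherwise connectivity yields an edge from $N_H(X)$ to the rest of that class, and no perfect matching can use it. Compared with the paper's bare citation, your route makes the section self-contained and uses nothing beyond Hall's theorem and symmetric differences of perfect matchings.

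Two small points to tidy.

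First, your sentence justifying the single-edge ear via $G$ being ``elementary'' is vague and unnecessary. Your general verification for $H+P$ already covers $P=ab$: take $ab$ together with a perfect matching of $H-a-b$. Niceness is trivially inherited in that case.

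Second, when you take the first return $z$ of the cycle to $V(H)$, you should state that $z\neq x$. This holds because the $M$-partner of $x$ lies in $V(H)$ and on the same cycle, so the excursion re-enters $V(H)$ before getting back to $x$. Equivalently, the last edge $wz$ is an $M'$-edge, and $x$ already has its $M'$-edge $xy$. Hence $P$ is a genuine ear with distinct ends, of odd length at least $3$.
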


In the bipartite setting, odd ear decompositions starting from $K_2$ characterize matching covered graphs.

\medskip
\medskip

A graph $G$ is called an \emph{almost-bipartite matching covered}
graph if $G$ admits an ear decomposition
\[
G_{1},G_{2},\dots,G_{r},G_{r+1}=G
\]
\noindent where
\begin{itemize}
	\item $G_{1}=K_{2}$ and
	\item $G_{i}$ is obtained from $G_{i-1}$ by adding an odd length ear for
	$i=2,\dots,r$ 
	\item $G_{r}$ is a bipartite (matching covered) graph, and
	\item $G_{r+1}=G$ is obtained from $G_{r}$ by adding an even length ear
	$P$ such that $V(P)\ii V(G_{r})=V(G_{1})$. 
\end{itemize}
\noindent See, for example, \cref{1231212s}.

\begin{figure}[H]
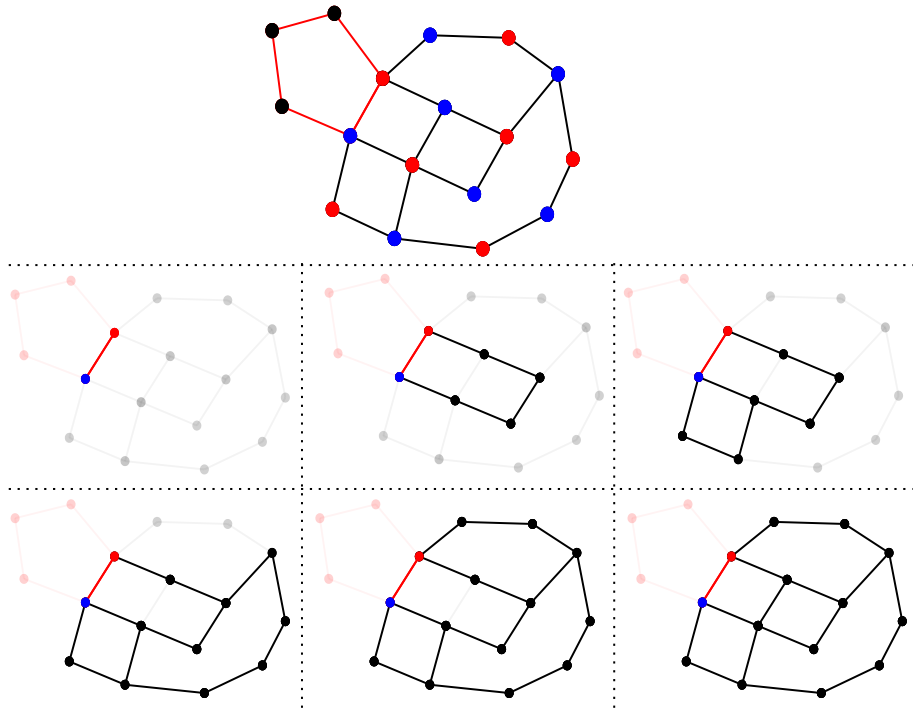

	
	\begin{center}

		\tikzset{every picture/.style={line width=0.75pt}} %set default line width to 0.75pt        
		
		% [inline block 0: 1 envs, 176562 chars -> data_tex | \begin{tikzpicture}[x=0.75pt,y=0.75pt,yscale=-1,xscale=0.87] 			%uncomment if require: \path (0,459); %set diagram left ...]


	\end{center}

	\caption{An example of an almost-bipartite matching covered graph.}
	\label{1231212s}
	
\end{figure}

We now present some first observations on almost-bipartite matching
covered graphs.

\begin{observation}\label{o1i23j12ji}
	Let $G$ be an almost-bipartite matching covered
	graph. Then
	\begin{itemize}
		\item $G$ is not bipartite,
		\item $G$ is a factor-critical graph,
		\item $G$ is a 2-bicritical graph,
		\item $L_{G}^{c}=G$. 
	\end{itemize}
\end{observation}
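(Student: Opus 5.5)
We prove the four items in order, each feeding the next. Throughout, let $G_1=K_2$ with vertex set $\{a,b\}$, let $G_r$ be the bipartite matching covered graph with bipartition $(A,B)$, $a\in A$, $b\in B$, and let $P$ be the final even ear with ends $a,b$.

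\emph{Not bipartite.} The path $P$ has even length, so together with the edge $ab$ it forms an odd cycle. Hence $G$ is not bipartite.

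\emph{Factor-critical.} We build an odd ear decomposition of $G$ starting from an odd cycle and then apply \cref{lovasz}. The starting cycle is $C_0=P+ab$, which is odd. Next we add the ears of $G_2,\dots,G_r$ in the original order; each has odd length. The only point to check is that every such ear still has both ends in the current graph. Its ends lie in $G_{i-1}$, and every vertex of $G_{i-1}$ lies in the current graph, since $C_0$ contains $V(G_1)$. This holds because $V(P)\cap V(G_r)=V(G_1)$, so the internal vertices of $P$ are new and never conflict with later ears. The result is an odd ear decomposition of $G$ starting from an odd cycle, so $G$ is factor-critical.

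\emph{2-bicritical.} Here we use \cref{1928u3123}: we must show $|N(S)|>|S|$ for every nonempty independent set $S$. This is the step I expect to need the most care. The natural tool is \cref{puyelarpendietnedesc}: the reordered decomposition above is an ear-pendant decomposition starting from the odd cycle $C_0$, using only ears. The catch is that the paper's definition of an ear in an ear-pendant decomposition requires the internal vertices to be distinct, and possibly requires length at least two, while a one-edge ear (an added chord) might not count. If chords are not allowed, I would argue directly instead.

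The direct argument runs as follows. Let $S$ be a nonempty independent set, write $S_A=S\cap A$, $S_B=S\cap B$, and let $S_P$ be the part of $S$ among the internal vertices of $P$. Since $G_r$ is matching covered and bipartite with $|A|=|B|$, Hall's condition together with elementary-graph theory gives the following: for a nonempty $X\subsetneq A$ we have $|N_{G_r}(X)|>|X|$, and $N_{G_r}(A)=B$. Write $P=a\,p_1\cdots p_{2k-1}\,b$. Its internal path has odd order $2k-1$, and its internal vertices alternate between the two neighbourhood classes. Combining the strict Hall surplus on $A$ and $B$ with a careful count along $P$, so that the vertices $p_1$ and $p_{2k-1}$ also supply neighbours of $a$ and $b$, yields $|N(S)|\ge|S|+1$.

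This count can be simplified by using factor-criticality. For any $v\in S$, the graph $G-v$ has a perfect matching $M_v$, and $M_v$ matches $S-v$ injectively into $N(S)$. This gives $|N(S)|\ge|S|-1+(\text{neighbours of }v\text{ not used})$. A parity argument along the odd cycle then supplies the extra two neighbours needed for the strict inequality.

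\emph{$L^c_G=G$.} Since $G$ is 2-bicritical, every nonempty independent set $I$ satisfies $d_G(I)<0$, while $d_G(\emptyset)=0$. Hence $d_I(G)=0$ and the only critical independent set is $\emptyset$. By \cref{larsonthm}(4), $L(G)=\emptyset\cup N(\emptyset)=\emptyset$, and therefore $L^c_G=G$.
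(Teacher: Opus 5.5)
Your items 1, 2 and 4 follow the paper's proof exactly: reorder the decomposition so it starts from the odd cycle $P+ab$, apply Lov\'asz's theorem, and use Larson's item 4 with $\emptyset$ as the only critical independent set. For item 3, the ``natural tool'' you name is also the paper's argument: the reordered sequence is an ear--pendant decomposition, so the Bourjolly--Pulleyblank theorem applies.

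Your worry about one-edge ears is unfounded. The paper defines an ear as an \emph{odd-length} path with endpoints in $B$ and internal vertices outside $B$, and length one is odd. This is also forced: without chords, $K_5$, which is 2-bicritical, would have no ear--pendant decomposition. So you should commit to that route and drop the hedge.

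The fallback ``direct argument'' is not a proof as written. The ``careful count along $P$'' is never carried out. In the factor-critical version, choosing $v\in S$ only yields $|N(S)|\ge|S|-1$, and the claimed ``parity argument'' that supplies two extra neighbours is left unjustified.

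A decomposition-free argument does exist if you pick $v\in N(S)$ instead. This set is nonempty, since $G$ is connected of order at least $3$, and $v\notin S$ because $S$ is independent. A perfect matching of $G-v$ then maps $S$ injectively into $N(S)-\{v\}$, so $|N(S)|\ge|S|+1$, and Pulleyblank's characterization gives 2-bicriticality. This derives item 3 directly from item 2 and avoids the ear--pendant theorem entirely, which is slightly cleaner than the paper's route.
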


\begin{proof}
	The first item follows from the fact that the last ear in the
	ear decomposition of $G$ forms an odd cycle. The second and third
	items follow from \cref{lovasz} and \cref{puyelarpendietnedesc}, respectively, after
	rearranging the decomposition of $G$ by starting from $G_{1}$,
	then adding the last ear (the even-length ear), and subsequently adding the remaining odd ears
	in the same original order. In this way, we obtain an odd ear decomposition
	starting from an odd cycle. On the other hand, the fourth item follows
	from the third item and \cref{larsonthm}.
\end{proof}

\begin{theorem}
	[\label{19}\cite{berge2005some}] An independent set
	$S$ is maximum if and only if every independent set disjoint
	from $S$ can be matched into $S$. 
\end{theorem}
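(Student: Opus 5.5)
The plan is to prove both directions directly, using Hall's theorem on the bipartite subgraph of $G$ with parts $T$ and $S$, where $T$ is an independent set disjoint from $S$. In both directions the key observation is how an independent set sitting next to $S$ can be exchanged with part of $S$.

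For the forward direction, assume $S\in\Omega(G)$ and let $T$ be an independent set with $T\cap S=\emptyset$. Let $H$ be the bipartite graph on $T\cup S$ whose edges are the edges of $G$ between $T$ and $S$. Suppose $T$ cannot be matched into $S$. By Hall's theorem there is a nonempty $T'\subseteq T$ with $\a{N(T')\ii S}<\a{T'}$. I then consider
\[
S':=(S-N(T'))\u T'.
\]
This set is independent for three reasons: $T'$ is independent, $S-N(T')$ is independent, and no vertex of $T'$ has a neighbour in $S-N(T')$. Moreover,
\[
\a{S'}=\a S-\a{N(T')\ii S}+\a{T'}>\a S,
\]
which contradicts the maximality of $S$. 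Hence the matching exists.

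For the converse, assume every independent set disjoint from $S$ can be matched into $S$, and suppose for contradiction that $S$ is not maximum. Take $S^{*}\in\Omega(G)$ and put $T:=S^{*}-S$. This set is independent and disjoint from $S$, so there is a matching $M$ of $T$ into $S$. A vertex of $T$ cannot be adjacent to a vertex of $S\ii S^{*}$, since both lie in the independent set $S^{*}$. Therefore $M(T)\subseteq S-S^{*}$, which gives $\a{S^{*}-S}\le\a{S-S^{*}}$. Adding $\a{S\ii S^{*}}$ to both sides yields $\a{S^{*}}\le\a S$, contradicting $\a{S^{*}}>\a S$.

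There is no serious obstacle. The only step needing care is the forward direction: the Hall deficiency must be measured in $N(T')\ii S$ rather than in $N(T')$, so that the exchanged set $S'$ is genuinely independent and strictly larger than $S$.
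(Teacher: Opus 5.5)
Your proof is correct in both directions. In the forward direction you measure the Hall deficiency in $N(T')\cap S$, so the exchanged set $(S-N(T'))\cup T'$ is independent and strictly larger than $S$. In the converse, $T=S^{*}-S$ has no neighbours in $S\cap S^{*}$, which forces the matching into $S-S^{*}$ and gives $|S^{*}|\le|S|$.

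The paper gives no proof of its own here. The statement is quoted from Berge, and only its ``if'' direction is used, in the proof of \cref{asdiuh1239uh}. Your Hall's-theorem exchange argument is the standard proof of this fact.
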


The next lemma establishes several properties of almost-bipartite
matching covered graphs that will be crucial in the sequel.

\begin{lemma}\label{asdiuh1239uh}
	Let $G$ be an almost-bipartite matching covered graph.
	Then
	\begin{itemize}
		\item $\a G=2\alpha(G)+1$,
		\item $\core G=\emptyset$,
		\item $\ker G=\emptyset$,
		\item $\corona G=V(G)$. 
	\end{itemize}
\end{lemma}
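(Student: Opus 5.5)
The plan is to work directly with the ear decomposition and exhibit explicit maximum independent sets. Let $(A,B)$ be the bipartition of the bipartite matching covered graph $G_{r}$. Since $G_{r}$ has a perfect matching, $|A|=|B|=:m$. Let $G_{1}=K_{2}$ have vertices $a\in A$ and $b\in B$. The last ear $P$ has even length, say $2k+2$, and writing it as $P=a,p_{1},p_{2},\dots,p_{2k+1},b$ gives $2k+1$ new internal vertices. Hence $|G|=2m+2k+1$, and the first item reduces to showing $\alpha(G)=m+k$.

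\emph{Lower bound.} The set $A\cup\{p_{2},p_{4},\dots,p_{2k}\}$ is independent: the even-indexed internal vertices are pairwise nonadjacent, and none of them is adjacent to $a$ or $b$. Its size is $m+k$.

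\emph{Upper bound.} Let $S$ be independent. Then $|S\cap V(G_{r})|\le m$, since $V(G_{r})$ is covered by $m$ disjoint edges of a perfect matching of $G_{r}$. Also $|S\cap\{p_{1},\dots,p_{2k+1}\}|\le k+1$, with equality only if $p_{1},p_{2k+1}\in S$. In that case $a,b\notin S$. Because $G_{r}$ is matching covered, the edge $ab$ lies in some perfect matching $M$ of $G_{r}$. The remaining $m-1$ edges of $M$ cover $V(G_{r})-\{a,b\}$, so $|S\cap V(G_{r})|\le m-1$. In every case $|S|\le m+k$, which proves $|G|=2\alpha(G)+1$. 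I expect this upper bound to be the only step with real content: the matching covered property is exactly what handles the boundary case.

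\emph{Corona and core.} I will write down three maximum independent sets, each of size $m+k$:
\[
S_{1}=A\cup\{p_{2},\dots,p_{2k}\},\quad S_{2}=B\cup\{p_{2},\dots,p_{2k}\},\quad S_{3}=(A-\{a\})\cup\{p_{1},p_{3},\dots,p_{2k+1}\}.
\]
For $S_{3}$, note that $p_{1}$ is adjacent only to $a$ and $p_{2}$, that $p_{2k+1}$ is adjacent only to $p_{2k}$ and $b$, and that $b\notin A$, so $S_{3}$ is independent. The union of these three sets is $V(G)$, so $\corona G=V(G)$. For the core, $S_{1}\cap S_{2}=\{p_{2},\dots,p_{2k}\}$, and this set is disjoint from $S_{3}$. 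Hence $\core G=\emptyset$.

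\emph{Ker.} By \cref{levit1}, $\ker G\subseteq\core G=\emptyset$, so $\ker G=\emptyset$.
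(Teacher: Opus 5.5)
Your proof is correct. It departs from the paper's in the two steps that carry content.

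For the upper bound on $\alpha(G)$, the paper fixes one set $S\supseteq A$ (its formula is missing in the text) and certifies maximality through Berge's criterion (\cref{19}). It uses the fixed matching $M'=M\cup\{x_3x_4,\dots,x_{k-2}x_{k-1}\}$, together with the claim that no independent set disjoint from $S$ contains $x_2$. As written, that claim fails: $\{x_2\}$ itself is a counterexample. Moreover, some independent sets through $x_2$ can only be matched into $S$ by switching to a perfect matching of $G_r$ that contains $x_1x_k$.

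Your counting argument runs as follows: $m$ disjoint edges cover $V(G_r)$; the odd path contributes $k+1$ only when both of its ends lie in $S$; and a perfect matching through $ab$ then lowers the first bound to $m-1$. This argument is self-contained and needs no Berge-type theorem. It also makes explicit that the first item genuinely needs a perfect matching of $G_r$ through $ab$, not merely some perfect matching of $G_r$.

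For $\ker G$, the paper argues that $G$ is 2-bicritical (\cref{o1i23j12ji}), so by \cref{1928u3123} the empty set is the only critical independent set. Your one-line appeal to $\ker G\subseteq\core G$ (\cref{levit1}) is shorter and avoids the rearranged ear decomposition. The paper's route, in exchange, yields the slightly stronger fact that $\emptyset$ is the unique critical independent set.

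Using three maximum independent sets instead of the paper's four is an inessential simplification.
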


\begin{proof}
	Consider the decomposition $G_{1},G_{2},\dots,G_{r},G_{r+1}=G$
	of the almost-bipartite matching covered graph $G$ as in the definition.
	Then $G_{r}$ is a bipartite graph, say with bipartition $(A,B)$.
	Since $G_{r}$ has a perfect matching, we have $\a A=\a B$ and
	$A\in\Omega(G_{r})$. Let $P=x_{1},\dots,x_{k}$ denote the even-length ear
	that we add to $G_{r}$ to obtain $G$. Then $k$ is odd and
	$V(G_{1})=\{x_{1},x_{k}\}$. Without loss of generality,
	assume that $x_{1}\in A$ and $x_{k}\in B$. Then the set $S=$
	is independent in $G$. On the other hand, there exists a matching $M$
	from the vertices of $B$ to the vertices of $A$. Let
	$M^{\P}=M\u\{x_{3}x_{4},x_{5}x_{6},\dots,x_{k-2}x_{k-1}\}$.
	Note that any independent set disjoint from $S$ does not contain
	$x_{2}$. Therefore, $M^{\P}$ matches any independent set
	disjoint from $S$ into $S$, and by \cref{19} $S$ is a maximum
	independent set of $G$. Hence
	\[
	\alpha(G)=\a S=\a A+\frac{\a P-3}{2}=\frac{\a G-1}{2}
	=\left\lfloor \frac{\a G}{2}\right\rfloor .
	\]
	
	\noindent This implies that $\a G=2\alpha(G)+1$. Note that the sets
	\begin{eqnarray*}
		& A\u\{x_{3},x_{5},\dots,x_{k-2}\}\\
		& A\u\{x_{4},x_{6},\dots,x_{k-1}\}\\
		& B\u\{x_{3},x_{5},\dots,x_{k-2}\}\\
		& B\u\{x_{2},x_{4},\dots,x_{k-3}\}
	\end{eqnarray*}
	
	\noindent are all maximum independent sets of $G$. Hence $\core G=\emptyset$
	and $\corona G=V(G)$. On the other hand, $\ker G=\emptyset$ as a consequence
	of item~3 of \cref{o1i23j12ji} and \cref{1928u3123}. 
\end{proof}

\begin{corollary}
	Every almost-bipartite matching covered graph is
	in $\mathcal{G}.$
\end{corollary}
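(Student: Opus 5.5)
The corollary follows almost at once from \cref{asdiuh1239uh}, so the plan is simply to combine its items into the defining identity of $\mathcal{G}$. Let $G$ be an almost-bipartite matching covered graph. We need
\[
\a{\corona G}+\a{\core G}=2\alpha(G)+1 .
\]

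By the fourth item of \cref{asdiuh1239uh}, $\corona G=V(G)$, so $\a{\corona G}=\a G$. By the second item, $\core G=\emptyset$, so $\a{\core G}=0$. The left-hand side is therefore $\a G$. The first item of the same lemma gives $\a G=2\alpha(G)+1$, which is exactly the required equality, and hence $G\in\mathcal{G}$.

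A second route, consistent with the reductive viewpoint of \cref{sss2}, goes through \cref{120ij31}. The fourth item of \cref{o1i23j12ji} gives $L_{G}^{c}=G$, so membership of $G$ in $\mathcal{G}$ is equivalent to membership of $L_G^c$, and the computation above applies directly to $L_G^c$. This route is not needed, but it shows that these graphs are natural 2-bicritical building blocks for $L^c$.

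The only step with real content is the lemma itself. The delicate points there are verifying that $S$ (essentially $A$ together with every other internal vertex of the even ear $P$) is a maximum independent set via \cref{19}, and checking that the four listed independent sets together cover all of $V(G)$ and have empty intersection. Since that lemma may be assumed, the corollary itself presents no obstacle.
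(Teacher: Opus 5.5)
Your proof is correct and is exactly how the paper obtains this corollary. The paper states it immediately after \cref{asdiuh1239uh} with no separate proof, since $\corona G=V(G)$, $\core G=\emptyset$ and $\a G=2\alpha(G)+1$ together give $\a{\corona G}+\a{\core G}=\a G=2\alpha(G)+1$; as you note, the detour through \cref{120ij31} is unnecessary.
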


Thus, as a consequence of \cref{120ij31}, we obtain the following.

\begin{theorem}\label{a9uih123}
	Let $G$ be a graph such that $L_{G}^{c}$ is an almost-bipartite
	matching covered graph. Then $G\in\mathcal{G}$. 
\end{theorem}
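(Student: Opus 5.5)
The plan is to combine the corollary just stated with the reduction \cref{120ij31}. Put $H:=L_{G}^{c}$. By hypothesis $H$ is an almost-bipartite matching covered graph, so the preceding corollary (that is, \cref{asdiuh1239uh}) gives $H\in\mathcal{G}$. Concretely, $\core H=\emptyset$ and $\corona H=V(H)$, and $\a{H}=2\alpha(H)+1$. Together these give
\[
\a{\corona H}+\a{\core H}=\a{H}=2\alpha(H)+1 .
\]

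Next we apply \cref{120ij31} in the direction ``$L_{G}^{c}\in\mathcal{G}$ implies $G\in\mathcal{G}$''. To be explicit, the proof of \cref{120ij31} splits $\corona G$ and $\core G$ along $L(G)$ and $L^{c}(G)$. The $L(G)$-part is handled by \cref{saodi123}, since $L_G$ is K\H{o}nig--Egerv\'ary by \cref{larsonthm}. This yields
\[
\a{\corona G}+\a{\core G}=2\alpha(L_{G})+\a{\corona H}+\a{\core H}.
\]
Substituting $2\alpha(H)+1$ for the last two terms and using $\alpha(G)=\alpha(L_G)+\alpha(H)$ from \cref{larsonthm} gives $2\alpha(G)+1$. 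Hence $G\in\mathcal{G}$.

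The one point needing care is the case $L(G)=\emptyset$. Then $G=H$, and the claim is just the corollary itself, so no decomposition is needed. There is also a notational slip in the proof of \cref{120ij31}: it writes $\ker{L_G^c}$ where $\core{L_G^c}$ is intended. We will use $\core{\cdot}$ throughout. Since $\ker H=\core H=\emptyset$ here anyway, the slip is harmless in our situation. Beyond this the argument is immediate; there is no real obstacle.
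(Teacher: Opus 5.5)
Your proposal is correct and follows the paper's route exactly. The paper also obtains this theorem by applying the corollary to \cref{asdiuh1239uh}, which gives $L_{G}^{c}\in\mathcal{G}$, and then the converse direction of \cref{120ij31}, which gives $G\in\mathcal{G}$. Your side remarks on the degenerate case $L(G)=\emptyset$ and on the slip $\ker{L_G^c}$ for $\core{L_G^c}$ in the proof of \cref{120ij31} are accurate and do not affect the argument.
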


Of course, an almost bipartite non-K\H{o}nig--Egerváry graph $G$
satisfies the hypothesis of \cref{a9uih123}, and hence \cref{a9uih123}
generalizes \cref{asd12w3123}.

$ $

Unlike almost bipartite non-K\H{o}nig--Egerváry graphs,
a graph $G$ such that $L_{G}^{c}$ is an almost-bipartite matching
covered graph does not necessarily satisfy $\ker G=\core G$; for example,
see \cref{123129iu3}. However, the equality holds when $L(G)=\emptyset$,
as a consequence of \cref{asdiuh1239uh}, which extends the family of graphs
known to satisfy $\ker G=\core G.$

\begin{figure}[H]
	
	\begin{center}

		\tikzset{every picture/.style={line width=0.75pt}} %set default line width to 0.75pt        
		
		\begin{tikzpicture}[x=0.75pt,y=0.75pt,yscale=-1,xscale=1]
			%uncomment if require: \path (0,300); %set diagram left start at 0, and has height of 300
			
			%Straight Lines [id:da5586130535428827] 
			\draw    (231.61,157.91) -- (292.84,157.91) ;
			\draw [shift={(292.84,157.91)}, rotate = 0] [color={rgb, 255:red, 0; green, 0; blue, 0 }  ][fill={rgb, 255:red, 0; green, 0; blue, 0 }  ][line width=0.75]      (0, 0) circle [x radius= 3.35, y radius= 3.35]   ;
			\draw [shift={(231.61,157.91)}, rotate = 0] [color={rgb, 255:red, 0; green, 0; blue, 0 }  ][fill={rgb, 255:red, 0; green, 0; blue, 0 }  ][line width=0.75]      (0, 0) circle [x radius= 3.35, y radius= 3.35]   ;
			%Straight Lines [id:da4328135513600557] 
			\draw    (292.84,157.91) -- (184.93,225.57) ;
			\draw [shift={(184.93,225.57)}, rotate = 147.91] [color={rgb, 255:red, 0; green, 0; blue, 0 }  ][fill={rgb, 255:red, 0; green, 0; blue, 0 }  ][line width=0.75]      (0, 0) circle [x radius= 3.35, y radius= 3.35]   ;
			\draw [shift={(292.84,157.91)}, rotate = 147.91] [color={rgb, 255:red, 0; green, 0; blue, 0 }  ][fill={rgb, 255:red, 0; green, 0; blue, 0 }  ][line width=0.75]      (0, 0) circle [x radius= 3.35, y radius= 3.35]   ;
			%Straight Lines [id:da34652301365046956] 
			\draw    (292.84,157.91) -- (416.43,157.91) ;
			\draw [shift={(416.43,157.91)}, rotate = 0] [color={rgb, 255:red, 0; green, 0; blue, 0 }  ][fill={rgb, 255:red, 0; green, 0; blue, 0 }  ][line width=0.75]      (0, 0) circle [x radius= 3.35, y radius= 3.35]   ;
			\draw [shift={(292.84,157.91)}, rotate = 0] [color={rgb, 255:red, 0; green, 0; blue, 0 }  ][fill={rgb, 255:red, 0; green, 0; blue, 0 }  ][line width=0.75]      (0, 0) circle [x radius= 3.35, y radius= 3.35]   ;
			%Straight Lines [id:da5137806786194634] 
			\draw    (184.93,225.57) -- (184.93,90.24) ;
			\draw [shift={(184.93,90.24)}, rotate = 270] [color={rgb, 255:red, 0; green, 0; blue, 0 }  ][fill={rgb, 255:red, 0; green, 0; blue, 0 }  ][line width=0.75]      (0, 0) circle [x radius= 3.35, y radius= 3.35]   ;
			\draw [shift={(184.93,225.57)}, rotate = 270] [color={rgb, 255:red, 0; green, 0; blue, 0 }  ][fill={rgb, 255:red, 0; green, 0; blue, 0 }  ][line width=0.75]      (0, 0) circle [x radius= 3.35, y radius= 3.35]   ;
			%Straight Lines [id:da7464629878136677] 
			\draw    (184.93,225.57) -- (231.61,157.91) ;
			\draw [shift={(231.61,157.91)}, rotate = 304.6] [color={rgb, 255:red, 0; green, 0; blue, 0 }  ][fill={rgb, 255:red, 0; green, 0; blue, 0 }  ][line width=0.75]      (0, 0) circle [x radius= 3.35, y radius= 3.35]   ;
			\draw [shift={(184.93,225.57)}, rotate = 304.6] [color={rgb, 255:red, 0; green, 0; blue, 0 }  ][fill={rgb, 255:red, 0; green, 0; blue, 0 }  ][line width=0.75]      (0, 0) circle [x radius= 3.35, y radius= 3.35]   ;
			%Straight Lines [id:da5989819345012146] 
			\draw    (184.93,90.24) -- (231.61,157.91) ;
			\draw [shift={(231.61,157.91)}, rotate = 55.4] [color={rgb, 255:red, 0; green, 0; blue, 0 }  ][fill={rgb, 255:red, 0; green, 0; blue, 0 }  ][line width=0.75]      (0, 0) circle [x radius= 3.35, y radius= 3.35]   ;
			\draw [shift={(184.93,90.24)}, rotate = 55.4] [color={rgb, 255:red, 0; green, 0; blue, 0 }  ][fill={rgb, 255:red, 0; green, 0; blue, 0 }  ][line width=0.75]      (0, 0) circle [x radius= 3.35, y radius= 3.35]   ;
			%Straight Lines [id:da8658384065662791] 
			\draw    (292.84,157.91) -- (184.93,90.24) ;
			\draw [shift={(184.93,90.24)}, rotate = 212.09] [color={rgb, 255:red, 0; green, 0; blue, 0 }  ][fill={rgb, 255:red, 0; green, 0; blue, 0 }  ][line width=0.75]      (0, 0) circle [x radius= 3.35, y radius= 3.35]   ;
			\draw [shift={(292.84,157.91)}, rotate = 212.09] [color={rgb, 255:red, 0; green, 0; blue, 0 }  ][fill={rgb, 255:red, 0; green, 0; blue, 0 }  ][line width=0.75]      (0, 0) circle [x radius= 3.35, y radius= 3.35]   ;

		\end{tikzpicture}

	\end{center}

\caption{An example of a graph $G$ such that $L_{G}^{c}$ is an almost-bipartite matching covered graph, but $\ker G\neq\core G.$}

	\label{123129iu3}
	
\end{figure}
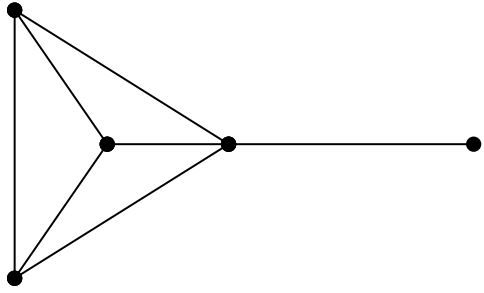

\begin{remark}
	If $G$ is an almost-bipartite matching covered graph,
	then $\ker G=\core G =\emptyset.$ 
\end{remark}

To conclude, in \cref{asdopijh123} we show another fundamental property of almost bipartite
non-K\H{o}nig--Egerváry graphs, which is inherited by graphs for which
$L_{G}^{c}$ is an almost-bipartite matching covered graph. Before that,
we need the following facts.

\begin{lemma}\label{asdouih12}
	Let $G$ be a K\H{o}nig--Egerváry graph and let $\emptyset\neq\Gamma\subseteq\Omega(G)$.
	Then there exists a matching from $V(G)-\U\Gamma$ into $\I\Gamma$.
\end{lemma}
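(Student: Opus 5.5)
We prove the statement using the Kőnig--Egerváry structure together with \cref{19}, applied to a single maximum independent set.

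\textbf{Step 1 (reduction to one set).} Let $G$ be Kőnig--Egerváry and fix $S\in\Gamma$. Choose a maximum matching $M$ of $G$. Since $\alpha(G)+\mu(G)=n(G)$, every maximum matching matches $V(G)-S$ into $S$. Indeed, each edge of $M$ has at most one endpoint in $S$, so $|M|\le|V(G)-S|=n-\alpha=\mu=|M|$. Hence every edge of $M$ joins $V(G)-S$ to $S$, and every vertex of $V(G)-S$ is saturated by $M$. In particular $M$ is a perfect matching between $V(G)-S$ and $M(V(G)-S)\subseteq S$. This holds simultaneously for every $S\in\Omega(G)$ with the same $M$.

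\textbf{Step 2 (where partners of $V(G)-\U\Gamma$ land).} Let $v\in V(G)-\U\Gamma$. For every $S\in\Gamma$ we have $v\notin S$, so by Step 1, $M(v)\in S$. Since $S\in\Gamma$ was arbitrary, $M(v)\in\I\Gamma$. Thus the restriction of $M$ to $V(G)-\U\Gamma$ is a matching from $V(G)-\U\Gamma$ into $\I\Gamma$. Because $M$ is a matching, this map is injective, so it really is a matching (and not merely a map).

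\textbf{Main point.} The only nontrivial step is Step 1, namely that one fixed maximum matching serves every maximum independent set at once. This is exactly the counting $|M|\le n-\alpha$ with equality, which forces every $M$-edge to cross between $V(G)-S$ and $S$. If one preferred to use \cref{19} instead, one would get for each $S$ a matching of the independent set $S'\setminus S$ into $S$. However, those matchings might differ for different $S$, and merging them is awkward. That is why we fix a maximum matching and rely on the Kőnig--Egerváry equality rather than on \cref{19}.
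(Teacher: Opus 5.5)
Your proof is correct and follows essentially the same route as the paper: both fix a single maximum matching $M$ (the paper chooses one matching $V(G)-I$ into some $I\in\Gamma$) and show that, for $v\notin\bigcup\Gamma$, the partner $M(v)$ lies in every member of $\Gamma$. Your counting argument, which shows that in a K\H{o}nig--Egerv\'ary graph every maximum matching matches $V(G)-S$ into $S$ for all $S\in\Omega(G)$ at once, makes explicit a step the paper leaves implicit: that $M$ pairs $I'-I$ bijectively with $I-I'$, so $M(v)\notin I-I'$.
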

\begin{proof}
	Suppose that $\a{\Gamma}>1$, otherwise there is nothing to prove.
	Let $I\in\Gamma$. Since $G$ is a K\H{o}nig--Egerváry graph, there exists a
	maximum matching $M$ of $G$ that matches all vertices of $V(G)-I$ into
	$I$. We show that $M$ matches $V(G)-\U\Gamma$ into $\I\Gamma$.
	Let $v\in V(G)-\U\Gamma$ and let $I^{\P}\in\Gamma-\{I\}$. Note that
	$M$ matches all vertices of $(V(G)-I)\cap I^{\P}$ into $I-I^{\P}$;
	therefore $M(v)\in I\cap I^{\P}$. Since $I^{\P}$ is arbitrary in
	$\Gamma-\{I\}$, we obtain that
	\[
	M(v)\in\I\Gamma.
	\]
	\noindent Thus $M$ matches $V(G)-\U\Gamma$ into $\I\Gamma$,
	as desired.
\end{proof}

In \cite{jarden2019monotonic} it is shown that \cref{asdouih12} holds when $\Gamma=\Omega(G)$.

\begin{theorem}\label{asdouih129i3}
	Let $G$ be a K\H{o}nig--Egerváry graph and let $\emptyset\neq\Gamma\subseteq\Omega(G)$.
	Then
	\[
	\U\Gamma\cup N\left(\I\Gamma\right)=V(G).
	\]
\end{theorem}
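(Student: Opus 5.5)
The plan is to derive \cref{asdouih129i3} directly from \cref{asdouih12}. Fix a non-empty $\Gamma\subseteq\Omega(G)$ and let $v\in V(G)$ be arbitrary. If $v\in\U\Gamma$ there is nothing to show, so assume $v\in V(G)-\U\Gamma$. By \cref{asdouih12}, there is a matching $M$ from $V(G)-\U\Gamma$ into $\I\Gamma$. In particular $M(v)\in\I\Gamma$ and $M(v)\neq v$, because $v\notin\I\Gamma\subseteq\U\Gamma$. Hence $v$ is adjacent to $M(v)\in\I\Gamma$, so $v\in N\left(\I\Gamma\right)$. Since $v$ was arbitrary, $V(G)=\U\Gamma\cup N\left(\I\Gamma\right)$.

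The only point needing care is that \cref{asdouih12} supplies a genuine matching edge at every vertex outside $\U\Gamma$, and not merely an injection that could fix some vertices. The statement of the lemma promises a matching \emph{from} $V(G)-\U\Gamma$ \emph{into} $\I\Gamma$. With the paper's convention $M(v)=v$ for unmatched vertices, and since $V(G)-\U\Gamma$ and $\I\Gamma$ are disjoint, this forces every such vertex to be matched. I will also check this against the lemma's proof: the König--Egerváry matching $M$ saturates $V(G)-I$ into $I$ for $I\in\Gamma$, and $V(G)-\U\Gamma\subseteq V(G)-I$, so every such vertex is saturated.

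The degenerate case $|\Gamma|=1$, with $\Gamma=\{I\}$, is covered in the same way: $V(G)-I$ is matched into $I$ because $G$ is König--Egerváry, so every vertex outside $I$ has a neighbor in $I$. This recovers the first item of \cref{0i1j230i123} when $\Gamma=\Omega(G)$. There is no real obstacle beyond confirming these saturation properties.
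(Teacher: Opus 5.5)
Your proof is correct and is essentially the paper's argument: the paper also invokes the preceding lemma to get $V(G)-\bigcup\Gamma\subseteq N\left(\bigcap\Gamma\right)$ and concludes immediately. Your extra check that the matching really saturates every vertex outside $\bigcup\Gamma$ is a detail the paper leaves implicit. It follows because $V(G)-\bigcup\Gamma$ and $\bigcap\Gamma$ are disjoint, under the paper's convention for a matching from $S$ to $U$.
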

\begin{proof}
	By \cref{asdouih12}, $V(G)-\U\Gamma\subseteq N\left(\I\Gamma\right)$. Hence
	\[
	V(G)=\U\Gamma\cup\left(V(G)-\U\Gamma\right)\subseteq\U\Gamma\cup N\left(\I\Gamma\right),
	\]
	\noindent which proves that $\U\Gamma\cup N\left(\I\Gamma\right)=V(G)$.
\end{proof}

\begin{theorem}\label{asdopijh123}
	Let $G$ be a graph such that $L_{G}^{c}$ is an almost-bipartite
	matching covered graph. Then
	\[
	\corona G\cup N(\core G)=V(G).
	\]
\end{theorem}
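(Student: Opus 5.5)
We plan to split $V(G)$ into $L(G)$ and $L^{c}(G)$ and handle each part separately, using the structure of $\Omega(G)$ already established in the proof of \cref{120ij31}. Fix a maximum critical independent set $I$. By \cref{larsonthm}, $L(G)=I\cup N(I)$ and $\alpha(G)=\alpha(L_G)+\alpha(L_G^{c})$. Hence every $S\in\Omega(G)$ satisfies $|S\cap L(G)|=\alpha(L_G)$ and $|S\cap L^{c}(G)|=\alpha(L_G^{c})$. This follows because $S\cap L(G)$ and $S\cap L^{c}(G)$ are independent in $L_G$ and $L_G^{c}$ respectively, and their sizes must add up to $\alpha(G)$. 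Conversely, $I\cup T$ is independent for every $T\in\Omega(L_G^{c})$, since $L^{c}(G)\cap N(I)=\emptyset$. So $I\cup T\in\Omega(G)$.

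Let $\Gamma=\{S\cap L(G): S\in\Omega(G)\}\subseteq\Omega(L_G)$; it is nonempty because $I\in\Gamma$. From the above, $\corona G\cap L(G)=\U\Gamma$ and $\core G\cap L(G)=\I\Gamma$.

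\textbf{Vertices in $L(G)$.} Since $L_G$ is König--Egerváry, \cref{asdouih129i3} applied to $\Gamma$ in $L_G$ gives $\U\Gamma\cup N_{L_G}(\I\Gamma)=L(G)$. Every vertex of $L(G)$ therefore lies either in $\corona G$ or in $N_G(\core G\cap L(G))\subseteq N_G(\core G)$.

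\textbf{Vertices in $L^{c}(G)$.} Here we use \cref{asdiuh1239uh}: $\corona{L_G^{c}}=L^{c}(G)$. For any $v\in L^{c}(G)$, choose $T\in\Omega(L_G^{c})$ with $v\in T$. Then $I\cup T\in\Omega(G)$ contains $v$, so $v\in\corona G$. Hence $L^{c}(G)\subseteq\corona G$.

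Combining the two parts gives $\corona G\cup N(\core G)=V(G)$.

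The main point requiring care is that $\Gamma$ really consists of maximum independent sets of $L_G$. This follows from the counting argument above: $|S\cap L^{c}(G)|\le\alpha(L_G^{c})$ forces $|S\cap L(G)|\ge\alpha(L_G)$, and the reverse inequality is automatic. A second point is that the neighborhood taken inside $L_G$ is contained in the neighborhood in $G$, which is immediate. Everything else follows directly from \cref{asdouih129i3} and \cref{asdiuh1239uh}.
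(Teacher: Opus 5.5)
Your proof is correct and follows essentially the paper's route. You split $V(G)$ into $L(G)$ and $L^{c}(G)$, apply \cref{asdouih129i3} in $L_G$ to the traces $\{S\cap L(G):S\in\Omega(G)\}$, and use $\corona{L_{G}^{c}}=L^{c}(G)$ from \cref{asdiuh1239uh} with the sets $I\cup T$ to get $L^{c}(G)\subseteq\corona G$. That last step is more direct than the paper's closing remark about isolated vertices, and the fact you use without comment, $|I|=\alpha(L_G)$ (needed for $I\cup T\in\Omega(G)$), is one the paper likewise takes from \cref{larsonthm}.
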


\begin{proof}
	Let $I$ be a maximum critical independent set. Then,
	by \cref{larsonthm}, $L(G)=I\cup N(I)$. 
	
	Since $\alpha(G)=\alpha(L_{G})+\alpha(L_{G}^{c})$, there exists $T\in\Omega(G)$
	such that $T\cap L(G)=I$, where $|I|=\alpha(L_{G})$, by \cref{larsonthm}.
	Then $T\cap L^{c}(G)$ can be any set from $\Omega(L_{G}^{c})$,
	that is,
	\begin{align*}
		\corona G & =\U_{S\in\Omega(G)}S\\
		& =\left(L(G)\cap\U_{S\in\Omega(G)}S\right)\cup\left(L^{c}(G)\cap\U_{S\in\Omega(G)}S\right)\\
		& =\left(L(G)\cap\U_{S\in\Omega(G)}S\right)\cup\corona{L_{G}^{c}}.
	\end{align*}
	\noindent Similarly, we show that 
	\begin{align*}
		\core G & =L(G)\cap\I_{S\in\Omega(G)}S.
	\end{align*}
	\noindent Therefore, by \cref{asdiuh1239uh}, we obtain
	\begin{align*}
		\core G & =\left(L(G)\cap\I_{S\in\Omega(G)}S\right)\cup\core{L_{G}^{c}},\\
		\corona G & =\left(L(G)\cap\U_{S\in\Omega(G)}S\right)\cup L^{c}(G).
	\end{align*}
	\noindent But by \cref{larsonthm}, $L_{G}$ is a K\H{o}nig--Egerváry graph.
	Then, by \cref{asdouih129i3}, we have
	\[	
	\left(L(G)\cap\U_{S\in\Omega(G)}S\right)\cup N\left(L(G)\cap\I_{S\in\Omega(G)}S\right)=L(G).
	\]
	\noindent Moreover, $L_{G}^{c}$ has no isolated vertices, and hence
	\[
	L(G)\cup N(L^{c}(G))=V(G)\subseteq\corona G\cup N\left(\core G\right).
	\]
	\noindent This completes the proof.
\end{proof}

\section{Conclusions}\label{0i12j3}

Several classes of graphs satisfying
\[
\a{\corona G}+\a{\core G}=2\alpha(G)+k
\]
have been studied in
\cite{kevin2025RDG,kevinBAB,kevinpastine}.
In particular, the following results are known.

\begin{theorem}[\cite{kevinpastine}]
	For every graph $G$,
	\[
	\left|\textnormal{corona}(G)\right|
	+\left|\textnormal{core}(G)\right|
	\le 2\alpha(G)+k,
	\]
	where $k$ is the number of odd cycles in $G$.
\end{theorem}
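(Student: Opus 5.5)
The plan is to reduce to the $2$-bicritical part via Larson's decomposition, exactly as in the proof of \cref{120ij31}, and then to bound the excess on $L_G^c$ by counting odd cycles along an ear--pendant decomposition. Write $k(G)$ for the number of odd cycles of $G$.

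\textbf{Reduction.} If $L^c(G)=\emptyset$, then $G=L_G$ is K\H{o}nig--Egerv\'ary, and \cref{0i1j230i123} gives the value $2\alpha(G)\le 2\alpha(G)+k$. Otherwise, the computation in the proof of \cref{120ij31}, which combines \cref{larsonthm} with \cref{saodi123} on $L_G$, gives
\[
\a{\corona G}+\a{\core G}=2\alpha(L_G)+\a{\corona{L_G^c}}+\a{\core{L_G^c}} .
\]
Since $\alpha(G)=\alpha(L_G)+\alpha(L_G^c)$ and every odd cycle of the induced subgraph $L_G^c$ is an odd cycle of $G$, it suffices to prove the following. For every $2$-bicritical graph $H$,
\[
\a{\corona H}+\a{\core H}\le 2\alpha(H)+k(H).
\]

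\textbf{The $2$-bicritical case.} Take an ear--pendant decomposition $H_0,H_1,\dots,H_p=H$, which exists by \cref{puyelarpendietnedesc}.

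First, I will show that $k(H)\ge p+1$, by injecting one new odd cycle into each step.
\begin{itemize}
\item $H_0$ contains an odd cycle. This is immediate if $H_0$ is an odd cycle. If $H_0$ is an odd homeomorph of $K_4$, every triangle of $K_4$ becomes an odd cycle.
\item A pendant step contributes its own odd cycle, which is new.
\item An ear step joins $u,v$ inside $H_{i-1}$, which is connected and non-bipartite. The key claim is that $H_{i-1}$ contains $u$--$v$ paths of both parities. This holds because every block traversed contains an odd cycle, or because one can detour around an odd cycle. Closing the ear with a path of the appropriate parity then gives a new odd cycle through the ear.
\end{itemize}

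Second, I will prove that the excess $\varepsilon(H_i):=\a{\corona{H_i}}+\a{\core{H_i}}-2\alpha(H_i)$ satisfies $\varepsilon(H_0)\le 1$ and $\varepsilon(H_i)\le \varepsilon(H_{i-1})+1$.
\begin{itemize}
\item For $H_0$: an odd cycle has $\core{}=\emptyset$ and $\a{\corona{}}=n=2\alpha+1$. An odd homeomorph of $K_4$ can be handled by a direct computation using the fact that it has $n=2\alpha+2$ or so.
\item For a step: an odd ear of length $2m+1$ adds $2m$ vertices, and a pendant adds an odd number $2m+1$ of vertices together with its path. One checks that $\alpha$ grows by exactly $m$, respectively $m$ or $m+1$. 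This uses \cref{19}: a maximum independent set of $H_{i-1}$ is extended by alternate interior vertices, and the added path has a near-perfect matching into the extension.
\item With $\alpha$ growing by exactly the half-size, the new vertices contribute at most their full number to the corona and at most one extra unit relative to the doubled half-size. The old part's contribution can only move in the favourable direction, because the maximum independent sets of $H_i$ restrict to maximum independent sets of $H_{i-1}$ (up to the one parity defect).
\end{itemize}

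\textbf{Main obstacle.} The hard step is this last monotonicity. Restricting $\Omega(H_i)$ to $H_{i-1}$ may give only a subfamily $\Gamma$ of $\Omega(H_{i-1})$, and $\core{}$ of a subfamily can be larger than $\core{H_{i-1}}$. So I would first try to show that the sum $\a{\U\Gamma}+\a{\I\Gamma}$ over such a restricted family is still at most $2\alpha(H_{i-1})+\varepsilon(H_{i-1})$. This is the non-K\H{o}nig--Egerv\'ary analogue of \cref{1230ij}. If it fails, I would fall back on an induction on $k$ that deletes an edge lying on an odd cycle, comparing $\Omega(G)$ with $\Omega(G-e)$.
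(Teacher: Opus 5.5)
The paper gives no proof of this statement; it is only quoted from the literature in the concluding section. I am therefore judging your argument on its own, and it has a genuine gap.

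\textbf{What works.} Your reduction to $L_G^c$ is correct: the excess $\varepsilon=|\textnormal{corona}|+|\textnormal{core}|-2\alpha$ is the same for $G$ and $L_G^c$. You should add that $L_G^c$ may be disconnected, so ear--pendant decompositions have to be applied componentwise.

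\textbf{First false claim: $k(H)\ge p+1$.} Every ear or pendant raises $|E|-|V|$ by exactly one. So when $H_0$ is an odd cycle, $p+1=|E(H)|-|V(H)|+1$, the cyclomatic number. An ear placed across a bridge creates only even cycles. Take triangles $abc$ and $yzw$ joined by the edge $ay$ (a pendant), then add the ear $a\,s\,t\,y$. The blocks are two triangles and a $4$-cycle, so every decomposition starts from an odd cycle and has $p+1=3$. Yet $k(H)=2$. Your ``paths of both parities'' claim fails here because the block $\{a,y\}$ contains no odd cycle, and a detour around an odd cycle is a walk, not a path.

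\textbf{Second false claim: the step bound.} The bound $\varepsilon(H_i)\le\varepsilon(H_{i-1})+1$ fails, and so does the claim that $\alpha$ grows by exactly $m$, when both ends of the ear lie in $\textnormal{core}(H_{i-1})$. Let $\Gamma$ consist of triangles $a_1b_1c_1$ and $a_2b_2c_2$ plus a vertex $v$ adjacent to $a_1$ and $a_2$. Join two copies $\Gamma,\Gamma'$ by the edge $b_1b_1'$; everything so far is built by pendants. Then add the ear $v\,x_1\,x_2\,v'$.
Before the ear, $\alpha=6$, the core is $\{v,v'\}$ and the corona has $10$ vertices, so $\varepsilon=0$.
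After the ear, $\alpha$ is still $6$, the corona is all $16$ vertices and the core is empty, so $\varepsilon=4$.
Your base case is also wrong for $K_4$-homeomorphs, since $\varepsilon(K_4)=2$.

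So the monotonicity you flag as the main obstacle is false, not merely unproved. Neither link of your chain $\varepsilon(H)\le p+1\le k(H)$ is justified step by step, and the second link is false outright.

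\textbf{The missing idea.} A direct argument needs neither Larson's decomposition nor ear--pendant decompositions. Put $X=\textnormal{core}(G)$, $Y=\textnormal{corona}(G)$ and $W=Y-X$.
Each $w\in W$ lies in some $S\in\Omega(G)$, and $X\subseteq S$, so there are no edges between $X$ and $W$.
Hence $J\cup X$ is independent for every independent $J\subseteq W$, while $S-X\subseteq W$ for every $S\in\Omega(G)$. This gives $\alpha(G[W])=\alpha(G)-|X|$, and therefore $|Y|+|X|-2\alpha(G)=|W|-2\alpha(G[W])$.
Now delete one vertex from each odd cycle of $G[W]$; there are at most $k$ of them. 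What remains is bipartite on at least $|W|-k$ vertices, and its larger colour class is independent. So $2\alpha(G[W])\ge |W|-k$.
This is exactly the claimed inequality. In fact it holds with $k$ replaced by the odd cycle transversal number of $G[\textnormal{corona}(G)-\textnormal{core}(G)]$.
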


\begin{theorem}[\cite{kevin2025RDG}]
	If $G$ is an $R$-disjoint graph with exactly $k$ disjoint odd cycles, then
	\[
	\left|\textnormal{corona}(G)\right|
	+\left|\textnormal{core}(G)\right|
	= 2\alpha(G)+k.
	\]
\end{theorem}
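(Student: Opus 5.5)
The plan is to reuse the decomposition identity from the proof of \cref{120ij31}. That argument does not depend on $G\in\mathcal{G}$; it shows for every graph $G$ that
\[
\a{\corona G}+\a{\core G}=2\alpha(L_{G})+\a{\corona{L_{G}^{c}}}+\a{\core{L_{G}^{c}}},
\]
together with $\alpha(G)=\alpha(L_G)+\alpha(L_G^c)$ from \cref{larsonthm}. The statement therefore reduces to showing
\[
\a{\corona{L_{G}^{c}}}+\a{\core{L_{G}^{c}}}=2\alpha(L_{G}^{c})+k .
\]

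The key structural step is to identify $L_G^c$. I will assume that the definition of an $R$-disjoint graph (from \cite{kevin2025RDG}) guarantees that $L_G^c$ is exactly the vertex-disjoint union of the $k$ odd cycles $C_1,\dots,C_k$ of $G$, with no other edges. If the definition does not give this directly, I would derive it from the structure theory as follows.
\begin{itemize}
\item By the Observation after \cref{1928u3123}, $L_G^c$ is $2$-bicritical, so by \cref{puyelarpendietnedesc} it has an ear-pendant decomposition.
\item The starting graph of that decomposition cannot be an odd homeomorph of $K_4$, because such a graph has two odd cycles sharing vertices.
\item No ear can be added to an odd cycle, since an ear creates a second odd cycle meeting the first.
\item Hence every piece is an odd cycle attached by a path (a pendant). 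The hypothesis of $R$-disjointness must then be used to rule out these connecting paths, or to show they lie in $L(G)$.
\end{itemize}
I expect this step to be the main obstacle, since it depends entirely on the precise definition of $R$-disjoint. My first attempt is to show that every vertex on a connecting path belongs to $I\cup N(I)$ for a maximum critical independent set $I$.

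Once $L_G^c=C_1\cup\dots\cup C_k$ as a disjoint union of odd cycles, the computation is routine. For a disjoint union, $\Omega$ is the product of the $\Omega(C_i)$. So corona and core are unions of the componentwise coronas and cores, and $\alpha$ is additive. For an odd cycle $C_{2m+1}$ we have $\alpha=m$. Every vertex lies in some maximum independent set, and every vertex is avoided by some maximum independent set, by rotating around the cycle. Hence $\a{\corona{C_i}}+\a{\core{C_i}}=(2m+1)+0=2\alpha(C_i)+1$; this is also the case $r=1$ of \cref{asdiuh1239uh}.

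Summing over $i$ gives $\a{\corona{L_G^c}}+\a{\core{L_G^c}}=2\alpha(L_G^c)+k$. Substituting into the identity above yields $\a{\corona G}+\a{\core G}=2\alpha(L_G)+2\alpha(L_G^c)+k=2\alpha(G)+k$, as claimed.
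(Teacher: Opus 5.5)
\textbf{There is a genuine gap: the hypothesis is never used.} Your reduction is sound. The computation in the proof of \cref{120ij31} never uses $G\in\mathcal{G}$, so $\a{\corona G}+\a{\core G}=2\alpha(L_G)+\a{\corona{L_G^c}}+\a{\core{L_G^c}}$ holds for every graph. Your count for a disjoint union of odd cycles is also correct. (The paper only quotes this theorem, so there is no in-paper proof to compare with.)

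Everything rests on the claim that $L_G^c$ is exactly the disjoint union of the $k$ odd cycles, and you assume it. That claim contains two assertions, both of which must come from the definition of $R$-disjointness:
(i) none of the $k$ odd cycles meets $L(G)$;
(ii) each component of $L_G^c$ contains only one odd cycle.
Neither follows from the odd cycles being pairwise disjoint:
\begin{itemize}
\item A triangle with a pendant vertex at each corner is K\"onig--Egerv\'ary. So $L^c(G)=\emptyset$, and the sum is $2\alpha(G)$, not $2\alpha(G)+1$.
\item Two triangles $abc$, $efg$ joined by the path $c\,x\,e$ form a $2$-bicritical graph, so $L_G^c=G$, with exactly two vertex-disjoint odd cycles. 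Yet every maximum independent set contains $x$, so $\core G=\{x\}$ and $\corona G=V(G)-\{c,e\}$, and the sum is $6=2\alpha(G)$.
\end{itemize}

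\textbf{Your fallback sketch does not close the gap either.} You check that no ear can be added only when the current graph is a single odd cycle. Once a pendant is present, an ear need not create an odd cycle. Take triangles $abc$, $efg$ joined by the path $c\,x\,y\,e$ and add the ear $ce$. The only new cycle is the $4$-cycle $cxyec$, and the result is $2$-bicritical with exactly two vertex-disjoint odd cycles. This graph even satisfies the conclusion, since $8=2\cdot 3+2$, although $L_G^c=G$ is not a union of cycles. So if $R$-disjoint graphs include such examples, your structural claim is false even where the theorem holds.

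The option of showing that connecting paths lie in $L(G)$ is vacuous. The ear--pendant decomposition is a decomposition of $L_G^c$ itself, so all of its vertices are in $L^c(G)$ by construction.

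\textbf{What is missing.} You need an argument, from the actual definition of $R$-disjointness, that establishes (i) and (ii). Failing (ii), you would need a direct computation of $\a{\corona{L_G^c}}+\a{\core{L_G^c}}$ for components containing several odd cycles. Once each component of $L_G^c$ carries a single odd cycle, your argument (as in the proof of \cref{asd12w3123}) shows the component is that cycle, and the rest goes through.
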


\begin{theorem}[\cite{kevinBAB}]
	Let $G=(B,G_{1},\dots,G_{k})$ be a BAB-graph. Then
	\[
	\left|\textnormal{ker}(G)\right|
	+\left|\textnormal{corona}(G)\right|
	\le 2\alpha(G)+k
	= d(G)+\left|G\right|.
	\]
\end{theorem}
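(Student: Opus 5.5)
Since the notion of a BAB-graph $G=(B,G_{1},\dots,G_{k})$ is only imported here from \cite{kevinBAB}, I will assume the structural facts about it that I expect that paper to provide: $L_{G}^{c}$ is the disjoint union of the $k$ blocks $G_{1},\dots,G_{k}$, and each block satisfies $\a{G_{i}}=2\alpha(G_{i})+1$ (as almost-bipartite matching covered graphs do by \cref{asdiuh1239uh}). Both should be checked against the definition in \cite{kevinBAB}. Granting them, $\a{L^{c}(G)}=2\alpha(L_{G}^{c})+k$. The plan is to split everything along Larson's decomposition (\cref{larsonthm}), exactly as in the proof of \cref{120ij31}, and to handle the $L$-part and the $L^{c}$-part separately.

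\emph{Step 1: the equality $2\alpha(G)+k=d(G)+\a G$.} Let $J$ be a maximum critical independent set, so $L(G)=J\cup N(J)$ and $d(G)=\a J-\a{N(J)}$. Since $L_{G}$ is König--Egerváry with $\alpha(L_{G})=\a J$, and $J$ can be matched onto $N(J)$, we get $\a{L(G)}=2\alpha(L_{G})-d(G)$. Adding $\a{L^{c}(G)}=2\alpha(L_{G}^{c})+k$ and using $\alpha(G)=\alpha(L_{G})+\alpha(L_{G}^{c})$ gives $\a G=2\alpha(G)+k-d(G)$, which is the claimed equality.

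\emph{Step 2: the inequality.} Every critical independent set is an intersection candidate for $\ker G$, and $J\subseteq L(G)$ is one of them, so $\ker G\subseteq J\subseteq L(G)$. By \cref{levit1}, $\ker G\subseteq\core G\cap L(G)$. As in the proof of \cref{120ij31}, every $T\in\Omega(G)$ meets $L(G)$ in a maximum independent set of $L_{G}$, and all such combinations occur. Hence $\core G\cap L(G)=\core{L_{G}}$ and $\corona G\cap L(G)=\corona{L_{G}}$. Then
\[
\a{\ker G}+\a{\corona G}\le\a{\core{L_{G}}}+\a{\corona{L_{G}}}+\a{L^{c}(G)}.
\]
By \cref{0i1j230i123} the first two terms sum to $2\alpha(L_{G})$, and the last term is $2\alpha(L_{G}^{c})+k$. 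The total is therefore $2\alpha(G)+k$.

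\emph{Main obstacle.} The delicate point is the claim $\a{L^{c}(G)}=2\alpha(L_{G}^{c})+k$, which requires identifying $L^{c}(G)$ with $G_{1}\cup\dots\cup G_{k}$ and checking that $\alpha$ is additive over these blocks. To do this I would exhibit a maximum critical independent set of $G$ lying inside $B$ and covering $B$ via $J\cup N(J)$. Then I would use that each $G_{i}$ is 2-bicritical by \cref{o1i23j12ji}, together with \cref{1928u3123}, to show that no vertex of any $G_{i}$ can belong to $L(G)$.
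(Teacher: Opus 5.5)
The paper does not prove this statement. It is quoted from \cite{kevinBAB} in the concluding section, and BAB-graphs are not even defined here, so your proposal has to stand on its own. It has a fixable error and a real gap.

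\textbf{The error (Step~2).} The claim that ``all such combinations occur'' does not follow from \cref{larsonthm}. That theorem only guarantees that $J$ has no neighbours in $L^{c}(G)$; vertices of $N(J)$ may have some. Take $K_4$ on $\{n,a,b,c\}$ with a pendant vertex $j$ at $n$. Then $d(G)=0$, $J=\{j\}$, $L(G)=\{j,n\}$ and $L_G^c$ is a triangle. Yet $\{n\}\in\Omega(L_G)$ lies in no member of $\Omega(G)$. Hence $\core G\cap L(G)=\{j\}\neq\emptyset=\core{L_G}$ and $\corona G\cap L(G)=\{j\}\neq\{j,n\}=\corona{L_G}$.

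The repair is the one implicit in the proof of \cref{120ij31}. Put $\Gamma=\{T\cap L(G):T\in\Omega(G)\}$, a nonempty subfamily of $\Omega(L_G)$. Then $\core G\cap L(G)=\I\Gamma$ and $\corona G\cap L(G)=\U\Gamma$, and \cref{saodi123} gives $\a{\I\Gamma}+\a{\U\Gamma}=2\alpha(L_G)$. Together with $\ker G\subseteq\I\Gamma$ this yields $\a{\ker G}+\a{\corona G}\le 2\alpha(L_G)+\a{L^{c}(G)}$.

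\textbf{The gap.} The repaired bound uses no BAB structure at all. Since $\a{L(G)}=\a J+\a{N(J)}=2\alpha(L_G)-d(G)$ for every graph, it says $\a{\ker G}+\a{\corona G}\le d(G)+\a G$ for every graph. Moreover, the identity $2\alpha(G)+k=d(G)+\a G$ is \emph{equivalent} to $\a{L^{c}(G)}=2\alpha(L_G^c)+k$, which is exactly what you assume. So the only hypothesis-dependent part of the theorem is restated rather than proved, and your sketch does not settle it.

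2-bicriticality of the $G_i$ cannot keep them out of $L(G)=J\cup N(J)$, because any vertex of $G_i$ adjacent to a vertex of $J$ lies in $N(J)$. For a triangle with a pendant vertex, take $J$ to be the pendant vertex together with a triangle vertex not adjacent to it. Then $L(G)=V(G)$, and if such an attachment were allowed with $k=1$ the identity would read $5=4$.

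You would also need three further facts:
\begin{itemize}
\item $\a{G_i}=2\alpha(G_i)+1$, which is false for 2-bicritical blocks such as $K_4$;
\item $\alpha(L_G^c)=\sum_i\alpha(G_i)$, which requires controlling edges between different blocks;
\item no vertex of $B$ lies in $L^{c}(G)$.
\end{itemize}
All of this must come from the precise attachment rules in the definition of a BAB-graph in \cite{kevinBAB}. Until you state that definition and carry out this identification, only the general inequality $\a{\ker G}+\a{\corona G}\le d(G)+\a G$ is actually proved.
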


The results of this paper show that the study of graphs satisfying
\[
|\corona G| + |\core G| = 2\alpha(G) + 1
\]
can be reduced, via Larson’s independence decomposition, to the study of
2-bicritical graphs satisfying the same equality. The results obtained in this paper naturally motivate the following open problems.

\begin{problem}
	Characterize the graphs satisfying
	\[
	\a{\corona G}+\a{\core G}=2\alpha(G)+k,
	\]
	for $k\ge 2$.
\end{problem}

\begin{problem}
	Characterize the $2$-bicritical graphs belonging to $\mathcal{G}$.
\end{problem}

\begin{conjecture}
	For every graph $G$,
	$
	\a{\corona G}+\a{\core G}=2\alpha(G)+k
	$
	if and only if
	$
	\a{\corona{L_{G}^{c}}}
	+\a{\core{L_{G}^{c}}}
	=2\alpha(L_{G}^{c})+k.
	$
\end{conjecture}

\medskip

Precise definitions of the notions involved can be found in the cited papers.

\section*{Acknowledgments}

	This work was partially supported by Universidad Nacional de San Luis, grants PROICO 03-0723 and PROIPRO 03-2923, MATH AmSud, grant 22-MATH-02, Consejo Nacional de Investigaciones
	Cient\'ificas y T\'ecnicas grant PIP 11220220100068CO and Agencia I+D+I grants PICT 2020-00549 and PICT 2020-04064.

	\section*{Declaration of generative AI and AI-assisted technologies in the writing process}
	During the preparation of this work the authors used ChatGPT-3.5 in order to improve the grammar of several paragraphs of the text. After using this service, the authors reviewed and edited the content as needed and take full responsibility for the content of the publication.

\section*{Data availability}

Data sharing not applicable to this article as no datasets were generated or analyzed during the current study.

\section*{Declarations}

\noindent\textbf{Conflict of interest} \ The authors declare that they have no conflict of interest.

\bibliographystyle{apalike}

\bibliography{TAGcitasV2025}

\end{document}